\documentclass[12pt]{article}
\usepackage{amsmath,amssymb,amsthm,graphicx, color, caption}
\usepackage{hyperref}
\usepackage{float}
\usepackage{fullpage}

\newtheorem{theorem}{Theorem}
\newtheorem{lemma}[theorem]{Lemma}
\newtheorem{observation}[theorem]{Observation}
\newtheorem{corollary}[theorem]{Corollary}

\newtheorem{question}[theorem]{Question}

\newtheorem{prop}[theorem]{Proposition}

\newenvironment{1claim}[1]{\par\noindent\underline{Claim 1:}\space#1}{}
\newenvironment{claimproof}[1]{\par\noindent\underline{Proof:}\space#1}{}
\newenvironment{1claim2}[1]{\par\noindent\underline{Claim 2:}\space#1}{}
\newenvironment{1claim3}[1]{\par\noindent\underline{Claim 3:}\space#1}{}
\newenvironment{1claim4}[1]{\par\noindent\underline{Claim 4:}\space#1}{}
\newenvironment{2claim}[1]{\par\noindent\underline{Claim 1:}\space#1}{}
\newenvironment{2claim2}[1]{\par\noindent\underline{Claim 2:}\space#1}{}
\newenvironment{2claim3}[1]{\par\noindent\underline{Claim 3:}\space#1}{}
\newenvironment{2claim4}[1]{\par\noindent\underline{Claim 4:}\space#1}{}
\newenvironment{2claim5}[1]{\par\noindent\underline{Claim 5:}\space#1}{}
\newenvironment{2claim6}[1]{\par\noindent\underline{Claim 6:}\space#1}{}
\newenvironment{2claim7}[1]{\par\noindent\underline{Claim 7:}\space#1}{}
\newenvironment{2claim8}[1]{\par\noindent\underline{Claim 8:}\space#1}{}

\captionsetup{justification=centering}

\begin{document}

\title{Reflexive coloring complexes for 3-edge-colorings of cubic graphs}

\author{
Fiachra Knox\thanks{Supported in part by a PIMS Postdoctoral Fellowship} \hspace{5mm} Bojan Mohar\thanks{Supported in part by the NSERC Discovery Grant R611450 (Canada), by the Canada Research Chairs program,
and by the Research Project J1-8130 of ARRS (Slovenia).}
\thanks{On leave from IMFM, Department of Mathematics, University
of Ljubljana.} \hspace{5mm} Nathan Singer \\[5mm]
Department of Mathematics\\ Simon Fraser University\\
Burnaby, BC V5A 1S6, Canada
}

\date{\today}

\maketitle

\begin{abstract}
Given a 3-colorable graph $X$, the 3-coloring complex $B(X)$ is the graph whose vertices are all the independent sets which occur as color classes in some 3-coloring of $X$.  Two color classes $C,D \in V(B(X))$ are joined by an edge if $C$ and $D$ appear together in a 3-coloring of $X$.  The graph $B(X)$ is 3-colorable.  Graphs for which $B(B(X))$ is isomorphic to $X$ are termed reflexive graphs.  In this paper, we consider 3-edge-colorings of cubic graphs for which we allow half-edges. Then we consider the 3-coloring complexes of their line graphs. The main result of this paper is a surprising outcome that the line graph of any connected cubic triangle-free outerplanar graph is reflexive. We also exhibit some other interesting classes of reflexive line graphs.
\end{abstract}

\section{Introduction}

In \cite{TutteNotes}, Tutte examined how many connected components the 4-coloring complex (see the definition below) of a triangulation of the plane could have. This was a question he had examined since the times of the Four Color Conjecture \cite{Tutte1}.

Edge-colorings of cubic graphs appear naturally in this setting. By the well-known coloring-flow duality, 4-colorings of triangulations are in bijective correspondence with 3-edge-colorings of their dual cubic graphs. Nevertheless, the coloring complex corresponding to 4-colorings of a triangulation and the coloring complex of 3-edge-colorings of the dual cubic graph may be very different. For example, the coloring complex of the icosahedron is connected, while all ten different 3-edge-colorings of the dual dodecahedron give separate components in the corresponding coloring complex.

Early on, this kind of questions raised attention, and Biggs \cite{Biggs} showed that the Coxeter graph, which has 28 vertices, has twice as many 3-edge-colorings, and the corresponding coloring complex is the line graph of a 2-arc-transitive cubic graph $B$ of order 56. From the list of known 2-arc-transitive graphs, Biggs concluded that this graph $B$ is the famous Klein map of genus 3 and of type $\{7,3\}_8$. It was only later that Fisk \cite{Fisk1} actually discovered that there was an error in this conclusion, because the coloring complex in question is disconnected, and is in fact isomorphic to two copies of the line graph of the Coxeter graph (which by itself is an interesting phenomenon).

In his attempts to answer questions that were posed by Tutte and others, Fisk \cite{Fisk1} introduced the notion of a reflexive coloring complex (see Section \ref{sec:2} for definition). Furthermore, he established in \cite{Fisk1} that the 3-coloring complexes of line graphs of  ``cubic'' cycles and paths are reflexive. By cubic cycles and paths we mean cubic graphs obtained from these by adding half-edges. In his monograph  \cite{Fiskbook}, Fisk provided a number of further developments in this area.

This paper continues the work started by Fisk. Our main result shows that the line graph of a connected cubic outerplanar graph $G$ (with added half-edges) is reflexive if and only if $G$ is triangle-free. This result, which appears as Theorem \ref{thm: Main}, gives a large infinite class of reflexive line graphs. In the second part of the paper we discover several infinite families of additional, interesting examples of reflexive line graphs.

Our results may be just the tip of an iceberg. The real question that remains open is why there are so many reflexive graphs (and why there are any at all). Our results and extensive computational evidence say that graphs with a large number of colorings tend to be reflexive, although this appears counterintuitive from another perspective: when we have many colorings, the coloring complex is large, so it may have too many colorings for the original graph to be reflexive.

\section{The coloring complex}
\label{sec:2}

Let $X$ be a $k$-colorable graph and let us consider a \emph{$k$-coloring} of $X$, which we treat as a partition
$\{V_1,V_2,\dots, V_k\}$ of $V(X)$ into $k$ independent sets $V_i$ ($1\le i\le k$). Here, some of the parts of the partition may be empty, but in all considered cases we will have the property that $X$ contains a $(k-1)$-clique, in which case at most one of the parts will be the empty set. The independent sets $V_i$ arising in a $k$-coloring of $X$ are called the \emph{color classes} of the coloring. The \emph{k-coloring complex}\footnote{It was Tutte \cite{Tutte1} who used the word ``complex'' for $B(X)$. The reason for this terminology is that $B(X)$ can be viewed as a simplicial complex in which each $k$-clique corresponding to a $k$-coloring of $X$ is made into a simplex of dimension $k-1$.} $B(X)$ is the graph whose vertices are the color classes of all $k$-colorings of $X$. Two vertices $C,D \in V(B(X))$ are joined by an edge if the color classes $C$ and $D$ appear together in a $k$-coloring of $X$.

It is not hard to see that the graph $B(X)$ is $k$-colorable under the following simple condition.

\begin{lemma}[Fisk \cite{Fisk1}]\label{lem:3Col}
Let $X$ be a $k$-colorable graph that contains a $(k-1)$-clique.  Then $B(X)$ is $k$-colorable.
\end{lemma}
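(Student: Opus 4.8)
The plan is to construct an explicit proper $k$-coloring of $B(X)$ using a fixed reference coloring of $X$. Let $K$ be a $(k-1)$-clique in $X$ with vertices $u_1, \dots, u_{k-1}$, and fix one $k$-coloring $\varphi_0 = \{W_1, \dots, W_k\}$ of $X$ in which $u_i \in W_i$ for $1 \le i \le k-1$ (so $W_k$ is the part, possibly empty, containing no vertex of $K$). The key observation is that in \emph{any} $k$-coloring of $X$, the $k-1$ vertices of the clique $K$ receive $k-1$ distinct colors, so exactly one color class avoids $K$ entirely, while each of the other $k-1$ classes contains exactly one $u_i$. This suggests defining a color function $c \colon V(B(X)) \to \{1, \dots, k\}$ by: $c(C) = i$ if $u_i \in C$ (for $1 \le i \le k-1$), and $c(C) = k$ if $C \cap K = \emptyset$. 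This is well-defined since each color class contains at most one vertex of the clique $K$.

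Next I would verify that $c$ is a proper coloring of $B(X)$. Suppose $C, D \in V(B(X))$ are adjacent, meaning they appear together in a common $k$-coloring $\psi$ of $X$. If both $C$ and $D$ met $K$, say $u_i \in C$ and $u_i \in D$ with $i = c(C) = c(D)$, then $u_i$ would lie in two distinct classes of $\psi$, a contradiction; and if neither met $K$, then $\psi$ would have two classes disjoint from $K$, forcing the $k-1$ vertices of $K$ into the remaining $k-2$ classes, contradicting that they get distinct colors. Hence $c(C) \ne c(D)$, so $c$ is proper.

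The main (and essentially only) subtlety is the well-definedness and exhaustiveness argument — namely that every color class $C$ of every $k$-coloring of $X$ satisfies exactly one of the two cases above, i.e. $|C \cap K| \le 1$ always, and in any single coloring the values $c(\cdot)$ over its $k$ classes are a permutation of $\{1, \dots, k\}$. Both follow immediately from the fact that $K$ is a clique of size $k-1$: within a proper coloring its vertices occupy $k-1$ distinct color classes. I expect no genuine obstacle here; the hypothesis that $X$ contains a $(k-1)$-clique is exactly what makes the construction canonical. One should note that the reference coloring $\varphi_0$ is not strictly needed for the definition of $c$ (only the clique $K$ and a labeling of its vertices), but it is reassuring that $c$ restricted to the classes of $\varphi_0$ realizes all $k$ colors, confirming $B(X)$ is not $(k-1)$-colorable by this scheme and that the bound is tight.
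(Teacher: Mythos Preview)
Your proof is correct and follows essentially the same approach as the paper: define color $i$ for those color classes containing the $i$th vertex of the fixed $(k-1)$-clique, and color $k$ for those disjoint from the clique. The paper's proof is terser and omits the verification details (and the unnecessary reference coloring $\varphi_0$), but the construction is identical.
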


\begin{proof}
Let $Q$ be a $(k-1)$-clique in $X$. For $i\in [k-1]$, let $\mathcal{C}_i$ be the set of all the color classes in $k$-colorings of $X$ which contain the $i$th vertex of $Q$.
Then $\{ \mathcal{C}_i : i \in [k-1] \} \cup \{ (V(B(X)) \setminus \cup_{i=1}^{k-1} \mathcal{C}_i)\}$ is a $k$-coloring of $B(X)$.
\end{proof}

When $B(X)$ is $k$-colorable, we can consider its $k$-coloring complex $B^{2}(X) = B(B(X))$.
It appears surprising that $X$ and $B^2(X)$ are closely related.

\begin{lemma}[Fisk \cite{Fisk1}]\label{lem:phi_X}
Let $X$ be a $k$-colorable graph without isolated vertices in which each edge is contained in a $(k-1)$-clique. Then the mapping
$$
    \phi_X :  v \mapsto \{C \in V(B(X)) \mid x \in C \}\quad (v\in V(X))
$$
is a graph homomorphism $X \to B^2(X)$.
\end{lemma}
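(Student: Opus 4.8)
The plan is to reduce everything to the construction already used in the proof of Lemma \ref{lem:3Col}. Two things must be checked: first, that $\phi_X(v)$ is genuinely a vertex of $B^2(X)=B(B(X))$, i.e.\ that $\phi_X(v)$ occurs as a color class in some $k$-coloring of $B(X)$; and second, that $uv\in E(X)$ implies $\phi_X(u)$ and $\phi_X(v)$ are adjacent in $B^2(X)$, i.e.\ that they occur together in a single $k$-coloring of $B(X)$. Note first that $B(X)$ is $k$-colorable by Lemma \ref{lem:3Col}: the hypotheses (no isolated vertices, each edge in a $(k-1)$-clique) force $X$ to contain an edge and hence a $(k-1)$-clique, so $B^2(X)$ makes sense.

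For the first point, fix $v\in V(X)$. Since $X$ has no isolated vertices, $v$ lies on some edge, and by hypothesis that edge lies in a $(k-1)$-clique $Q$; thus $v\in V(Q)$. Now apply the proof of Lemma \ref{lem:3Col} to this particular $Q$, choosing the indexing of $V(Q)$ so that $v$ is the first vertex. That proof produces a $k$-coloring of $B(X)$ one of whose classes is exactly $\mathcal{C}_1=\{C\in V(B(X)) : v\in C\}=\phi_X(v)$. Hence $\phi_X(v)\in V(B^2(X))$. Moreover $\phi_X(v)\neq\varnothing$, because $X$ is $k$-colorable and in any $k$-coloring the vertex $v$ lies in some color class.

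For the second point, let $uv\in E(X)$ and pick, by hypothesis, a $(k-1)$-clique $Q$ containing both $u$ and $v$. Apply the proof of Lemma \ref{lem:3Col} to this $Q$, indexing $V(Q)$ so that $u$ is the first vertex and $v$ the second. The resulting $k$-coloring of $B(X)$ then contains $\mathcal{C}_1=\phi_X(u)$ and $\mathcal{C}_2=\phi_X(v)$ as two of its color classes. These classes are disjoint — a color class of $X$, being independent, meets the clique $Q$ in at most one vertex — and both are nonempty as in the previous paragraph, hence they are distinct. So $\phi_X(u)$ and $\phi_X(v)$ are distinct vertices of $B(X)$ that appear together in a $k$-coloring of $B(X)$, and are therefore adjacent in $B^2(X)$. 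This establishes that $\phi_X$ is a graph homomorphism $X\to B^2(X)$.

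There is no serious obstacle here; the only points needing a little care are confirming that the construction in the proof of Lemma \ref{lem:3Col} applies verbatim to \emph{any} chosen $(k-1)$-clique of $X$ with \emph{any} chosen ordering of its vertices, and verifying that $\mathcal{C}_1$ and $\mathcal{C}_2$ above are genuinely distinct, so that we obtain a true edge of the simple graph $B^2(X)$ rather than a loop. The hypotheses ``no isolated vertices'' and ``each edge lies in a $(k-1)$-clique'' are used precisely to guarantee that every vertex, and every edge, of $X$ lies inside some $(k-1)$-clique, which is what makes $\phi_X$ well defined on vertices and edge-preserving.
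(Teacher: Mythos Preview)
The paper does not actually supply a proof of Lemma~\ref{lem:phi_X}; it is stated with attribution to Fisk and then used without argument. So there is nothing to compare against directly.

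Your proof is correct and is exactly the natural argument: you reuse the explicit $k$-coloring of $B(X)$ from the proof of Lemma~\ref{lem:3Col}, applied to a $(k-1)$-clique through $v$ (to show $\phi_X(v)\in V(B^2(X))$) and then to a $(k-1)$-clique through the edge $uv$ (to show $\phi_X(u)\phi_X(v)\in E(B^2(X))$). You also handle the one point that needs care, namely that $\phi_X(u)$ and $\phi_X(v)$ are distinct (being nonempty and disjoint), so that the image of an edge is a genuine edge rather than a loop. This is precisely the argument one would expect Fisk's proof to be, and it is complete as written.
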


The graph homomorphism outlined in Lemma \ref{lem:phi_X},
$$\phi_{X}: X \to B^2(X)$$
maps a vertex $v\in V(X)$ to the set of all color classes of $X$ containing $v$. We will refer to it as a \emph{canonical homomorphism}. Graphs for which the canonical homomorphism is an isomorphism are termed \emph{reflexive graphs} and they will be our main concern in this paper. There are no obvious reasons why any graph would be reflexive, yet there are interesting infinite classes. Our main goal is trying to understand this phenomenon.

The following is a necessary condition for $\phi_X$ to be reflexive. We say that the graph $X$ is \emph{colorful} (for $k$-colorings) if for any two vertices $x,y$ there exists a $k$-coloring, which has $x$ and $y$ in different color classes.
Let us add the following basic observation:

\begin{observation}
\label{obs:colorful}
A graph $X$ is colorful for $k$-colorings if and only if the mapping $\phi_X$ is injective.
\end{observation}

Let us now assume that $k=3$. (This assumption will be kept throughout the paper.) The 3-coloring complex $B(X)$ of a graph $X$ is composed of triangles, one triangle for every 3-coloring of $X$. The following result shows that $B(X)$ has no other triangles and also shows that the triangles are edge-disjoint.

\begin{lemma} \label{lem: TriFree}
Let $X$ be a 3-chromatic graph without isolated vertices. Then any triangle in $B(X)$ represents a 3-coloring of $X$.
Consequently, each edge of $B(X)$ is contained in precisely one triangle.
\end{lemma}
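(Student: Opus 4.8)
The plan is to fix an arbitrary triangle $CDE$ of $B(X)$ and verify directly that $\{C,D,E\}$ is a $3$-coloring of $X$, that is, a partition of $V(X)$ into independent sets. Two of the three required properties come essentially for free. Each of $C,D,E$ is an independent set of $X$ because it is a vertex of $B(X)$. And $C,D,E$ are pairwise disjoint: any two of them, say $C$ and $D$, are joined by an edge of $B(X)$, so they occur together as color classes of some $3$-coloring of $X$; being distinct vertices they are distinct color classes, hence disjoint parts of that partition. (Here I use that $X$ is $3$-chromatic, so no color class of a $3$-coloring is empty, and the three classes are genuinely three distinct nonempty sets.) It therefore suffices to prove that $C\cup D\cup E=V(X)$.

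To establish this, I would fix, for each of the three edges of the triangle, a $3$-coloring witnessing it, written as $V(X)=C\sqcup D\sqcup F=D\sqcup E\sqcup G=C\sqcup E\sqcup H$ for suitable independent sets $F,G,H$. Then argue by contradiction: suppose some vertex $v\in V(X)$ lies in none of $C,D,E$. Reading off the first coloring, $v\notin C$ and $v\notin D$ force $v\in F$; similarly $v\in G$ and $v\in H$. Since $X$ has no isolated vertices, $v$ has a neighbour $w$, and since $F,G,H$ are independent, $w\notin F$, $w\notin G$, $w\notin H$. Reading off the three colorings again now gives $w\in C\cup D$, $w\in D\cup E$, and $w\in C\cup E$. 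But $C,D,E$ are pairwise disjoint, so whichever of $C,D,E$ contains $w$ --- and one of them does, since $w\in C\cup D$ --- it is excluded from the other two, contradicting one of the three containments (for instance, $w\in C$ yields $w\notin D\cup E$). This contradiction proves $C\cup D\cup E=V(X)$, so $\{C,D,E\}$ is a $3$-coloring of $X$.

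For the ``consequently'' clause: given an edge $CD$ of $B(X)$, by definition $C$ and $D$ appear together in a $3$-coloring with classes $\{C,D,F\}$; since $X$ is $3$-chromatic, $F$ is nonempty and distinct from $C$ and $D$, and all three pairs lie together in this coloring, so $CDF$ is a triangle through $CD$, giving existence. For uniqueness, any triangle $CDF'$ containing $CD$ is, by the part just proved, a $3$-coloring, so $C\sqcup D\sqcup F'=V(X)$ forces $F'=V(X)\setminus(C\cup D)=F$; hence the triangle is unique.

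I do not anticipate a genuine obstacle here. The only slightly delicate point is the short case analysis on which of $C,D,E$ the neighbour $w$ belongs to, and noticing that the hypothesis of no isolated vertices is used in exactly one place --- to produce $w$ from the hypothetical uncovered vertex $v$. It is also worth recording explicitly the easy fact, used above, that $3$-chromaticity forbids empty color classes, so that every vertex of $B(X)$ and every class of every $3$-coloring of $X$ is a nonempty independent set.
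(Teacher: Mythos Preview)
Your proof is correct and follows essentially the same route as the paper. Both arguments introduce the three ``complementary'' color classes witnessing the edges of the triangle, take a vertex $v$ outside $C\cup D\cup E$, pick a neighbour $w$ (this is the one place the no-isolated-vertices hypothesis is used), and derive a contradiction; the paper phrases the final step as ``$w\in C$ forces $w,v$ both into the independent set $G$'', whereas you phrase it as ``$w$ cannot lie in $(C\cup D)\cap(D\cup E)\cap(C\cup E)$ when $C,D,E$ are pairwise disjoint'', but these are the same observation.
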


\begin{proof}
Suppose, for a contradiction, that $B(X)$ contains a triangle $C_1C_2C_3$ which does not represent a 3-coloring of $X$.
Each edge of $C_1C_2C_3$ must be in a 3-coloring of $X$.
Let $C_{ij}$ be the third color class of the 3-coloring containing $C_{i}$ and $C_{j}$, for each $ i \neq j $ in $ \{1,2,3\}$.

Now, \begin{math} C_1 \cap C_2 = C_2 \cap C_3 = C_3 \cap C_1 = \emptyset \end{math}.
Hence, $C_1 \subseteq C_{23}$, $C_2 \subseteq C_{31}$ and \begin{math} C_3 \subseteq C_{12} \end{math}.
Let \begin{math} H = V(X) \setminus (C_1 \cup C_2 \cup C_3) \end{math}.
Then $ C_{12} = V(X) \setminus (C_1 \cup C_2 ) = C_3 \cup H$, $C_{23} = C_1 \cup H$ and \begin{math} C_{31} = C_2 \cup H \end{math}.
If \begin{math} H = \emptyset \end{math}, then $C_{12} = C_3$ and $C_1C_2C_3$ is a 3-coloring of $X$, which is a contradiction. Thus, \begin{math} H \neq \emptyset \end{math}; let us take a vertex $v\in H$. Since $X$ has no isolated vertices and $H$ is an independent set in $X$, $v$ has a neighbour \begin{math} u\notin H \end{math}.
Without loss of generality, suppose that \begin{math} u \in C_1 \end{math}.
Then, as \begin{math} C_1 \subseteq C_{23} \end{math}, \begin{math} u,v \in C_{23} \end{math}.
However, this is a contradiction, since $u$ and $v$ are neighbours, while \begin{math} C_{23} \end{math} is an independent set.

Let us now consider an edge $AC$ in $B(X)$. By the above, any triangle containing $AC$ corresponds to a 3-coloring with color classes $A$ and $C$. But the third color class is just the complement of $A\cup C$, so there is only one such 3-coloring.
\end{proof}

As we will primarily concern ourselves with 3-edge-colorings in this article, we add a few related definitions.
Firstly, given a 3-edge-colorable graph $G$, we define the \emph{3-edge-coloring complex} of $G$ as the 3-coloring complex of the line graph $L(G)$ of $G$.
Furthermore, we say that a 3-edge-colorable graph $G$ is \emph{edge-reflexive} if $L(G)$ is a reflexive graph, and that $G$ is \emph{edge-colorful} if $L(G)$ is colorful.

When we refer to a \emph{cubic graph}, we allow \emph{half-edges}.
These are edges which are incident with only one vertex.
In this way, we can treat every graph of maximum degree three as a cubic graph by adding half-edges to the vertices of smaller degrees. (On the other hand, we do not allow double edges, since cubic graphs containing double edges cannot be edge-colorful, unless we have a triple edge.) With this understanding, we will in particular speak of \emph{cubic paths}, \emph{cubic cycles} and \emph{cubic trees}. We refer to Figure \ref{fig:cubictree} for some examples.

\begin{figure}
\centering
\includegraphics[width=0.56\textwidth]{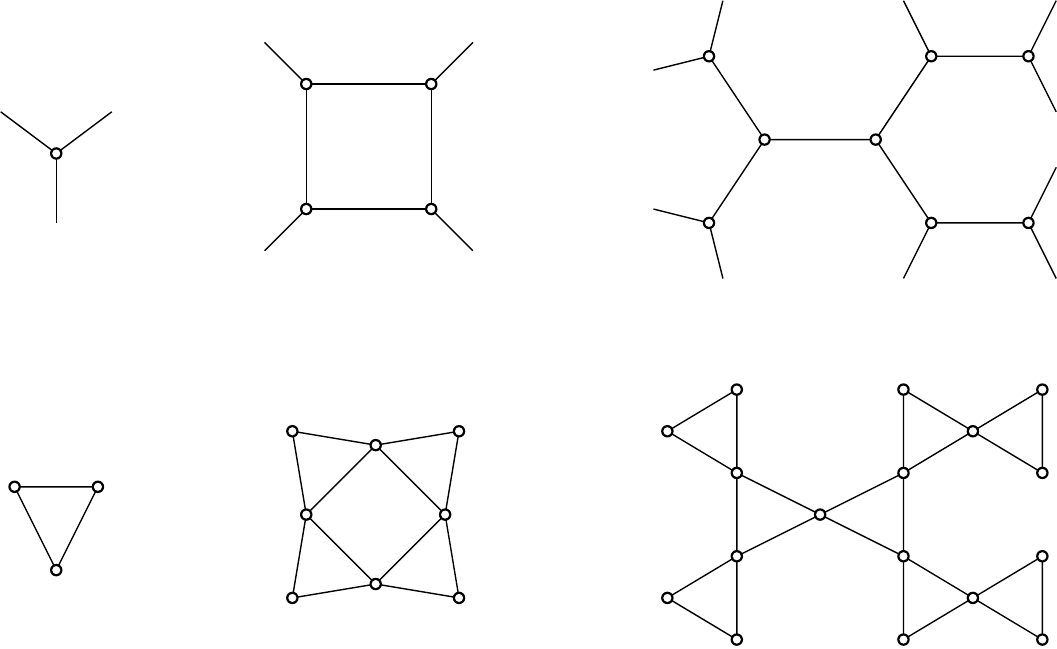}
\caption{Cubic vertex, cubic 4-cycle and a cubic tree with their line graphs.}
  \label{fig:cubictree}
\end{figure}

The following observation (combined with Observation \ref{obs:colorful}) shows that cubic graphs that contain triangles cannot be edge-reflexive.

\begin{observation}
\label{obs:triangle-free}
A cubic graph containing a triangle is not edge-colorful.
\end{observation}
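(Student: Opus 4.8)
The plan is to unwind the definitions and then isolate a single pair of edges that are forced to receive the same color in \emph{every} proper $3$-edge-coloring of $G$. Recall that $G$ is edge-colorful precisely when $L(G)$ is colorful, that is, when for every pair of distinct edges (or half-edges) $e,f$ of $G$ there is a proper $3$-edge-coloring of $G$ assigning different colors to $e$ and $f$; so it suffices to exhibit two distinct edges of $G$ that always receive the same color.

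Let $T$ be a triangle in $G$ with (distinct) vertices $a,b,c$ and edges $e_{ab}=ab$, $e_{bc}=bc$, $e_{ca}=ca$. Since $G$ is cubic (with half-edges allowed) and has no multiple edges, each of $a,b,c$ is incident with exactly one further edge; write $f_a$ for the edge incident with $a$ that is distinct from $e_{ab}$ and $e_{ca}$ (possibly $f_a$ is a half-edge). Note $f_a\ne e_{bc}$, since $f_a$ is incident with $a$ while $e_{bc}=bc$ is not. The two vertices of $L(G)$ I will use are $f_a$ and $e_{bc}$. The key step is the claim that in any proper $3$-edge-coloring $\gamma$ of $G$ we have $\gamma(f_a)=\gamma(e_{bc})$. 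Indeed, $e_{ab},e_{bc},e_{ca}$ pairwise share a vertex, so $\gamma$ assigns them three distinct colors; hence $\gamma(e_{bc})$ is the unique color outside $\{\gamma(e_{ab}),\gamma(e_{ca})\}$. On the other hand the three edges incident with $a$ are $e_{ab},e_{ca},f_a$, and $\gamma$ must give these distinct colors, so $\gamma(f_a)$ is likewise the unique color outside $\{\gamma(e_{ab}),\gamma(e_{ca})\}$; thus $\gamma(f_a)=\gamma(e_{bc})$, proving the claim. (If $G$ has no proper $3$-edge-coloring at all, then trivially no $3$-coloring of $L(G)$ separates any pair of vertices.)

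It follows that no $3$-coloring of $L(G)$ places the distinct vertices $f_a$ and $e_{bc}$ in different color classes, so $L(G)$ is not colorful and $G$ is not edge-colorful, as required. There is no genuine obstacle in this argument; the only point requiring care is the half-edge and no-multiple-edge convention, which is exactly what ensures both that the extra edge $f_a$ at $a$ exists and that it is distinct from the opposite triangle edge $e_{bc}$.
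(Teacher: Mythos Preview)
Your proof is correct and follows essentially the same approach as the paper: both arguments pick the triangle $abc$, observe that the third edge $f_a$ at $a$ and the opposite triangle edge $bc$ must receive the same color in every proper $3$-edge-coloring, and conclude that $L(G)$ is not colorful. Your version is simply a more detailed write-up of the paper's one-line observation.
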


\begin{proof}
Let $abc$ be a triangle in $G$ and let $e$ be the third edge incident with the vertex $a$. Then the edges $e$ and $bc$ have the same color in every 3-edge coloring of $G$.
\end{proof}

We also have the following lemma.

\begin{lemma}
\label{lem:edge-reflexive-countC3}
Let $G$ be a cubic edge-colorful graph of order $n$ and let $X=L(G)$. The following statements are equivalent:
\begin{enumerate}
  \item [\rm (i)] $G$ is edge-reflexive.
  \item [\rm (ii)] $B(X)$ has precisely $n$ $3$-colorings.
  \item [\rm (iii)] For every $3$-coloring $\{{\mathcal A,\mathcal B,\mathcal C}\}$ of $B(X)$, there is a vertex in $G$ with incident edges $e,f,g$ such that $\{{\mathcal A,\mathcal B,\mathcal C}\} = \{\phi_X(e),\phi_X(f),\phi_X(g)\}$.
\end{enumerate}
\end{lemma}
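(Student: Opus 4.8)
The plan is to prove the three equivalences cyclically, say (i) $\Rightarrow$ (ii) $\Rightarrow$ (iii) $\Rightarrow$ (i), using as the central tool the canonical homomorphism $\phi_X \colon X \to B^2(X)$ together with Lemma~\ref{lem: TriFree}. The key dictionary is this: since $G$ is cubic, each vertex $v$ of $G$ meets three edges $e,f,g$ which form a triangle in $X = L(G)$, and in every $3$-edge-coloring these receive the three distinct colors; conversely, every triangle of $X$ arises this way (a triangle in a line graph is either of this ``star'' type or comes from a triangle of $G$, but $G$ is triangle-free by Observation~\ref{obs:triangle-free} since $G$ is edge-colorful). Thus the triangles of $X$ are in bijection with $V(G)$, and each triangle $\{e,f,g\}$ of $X$ is a $3$-coloring of $X$, hence (by the definition of $B$) a triangle in $B(B(X)) = B^2(X)$; the three color classes of $B(X)$ forming this triangle are exactly $\phi_X(e), \phi_X(f), \phi_X(g)$. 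So $\phi_X$ maps the $n$ triangles of $X$ onto $n$ triangles of $B^2(X)$, equivalently onto $n$ $3$-colorings of $B(X)$, and by Lemma~\ref{lem: TriFree} applied to $B(X)$ these are distinct (distinct triangles of $X$ give distinct $3$-colorings of $B(X)$ because they give distinct edges, say, or one checks the color classes differ).

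For (iii) $\Rightarrow$ (i): since $G$ is edge-colorful, Observation~\ref{obs:colorful} gives that $\phi_X$ is injective, so it remains to show surjectivity of $\phi_X$ on both vertices and edges. Statement (iii) says every $3$-coloring of $B(X)$ — equivalently, by Lemma~\ref{lem: TriFree} applied to $B(X)$, every triangle of $B^2(X)$, since each triangle of $B^2(X)$ is a $3$-coloring of $B(X)$ and every $3$-coloring gives a triangle — is the image under $\phi_X$ of a star-triangle of $X$. Now $X = L(G)$ is a line graph of a cubic graph, so $X$ is $4$-regular (with half-edges contributing degree-$2$ or degree-$1$ behaviour to handle separately) and every vertex and edge of $X$ lies in such a triangle; hence every vertex and edge of $B^2(X)$ lies in a triangle, and since all these triangles are hit by $\phi_X$, the map $\phi_X$ is surjective on vertices and edges, so it is an isomorphism, i.e.\ $G$ is edge-reflexive. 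The implication (i) $\Rightarrow$ (ii) is immediate: if $\phi_X$ is an isomorphism $X \to B^2(X)$, then the $3$-colorings of $B(X)$ (= triangles of $B^2(X)$, one per $3$-coloring) correspond to triangles of $X$ (= $3$-colorings of $X$), and there are exactly $n$ of the latter, namely the $n$ star-triangles. For (ii) $\Rightarrow$ (iii): we always have at least $n$ distinct $3$-colorings of $B(X)$ of the form $\{\phi_X(e),\phi_X(f),\phi_X(g)\}$ coming from the $n$ vertices of $G$, and these are pairwise distinct by the injectivity of $\phi_X$ and Lemma~\ref{lem: TriFree}; if there are exactly $n$ in total, then every $3$-coloring of $B(X)$ is of this form, which is (iii).

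I would organize the write-up around one preliminary claim — ``the triangles of $X = L(G)$ are precisely the sets $\{e,f,g\}$ of edges at a common vertex of $G$, and each is a $3$-coloring of $X$, giving a canonical bijection between $V(G)$, the triangles of $X$, and a set of $n$ distinct $3$-colorings of $B(X)$'' — and then the three short implications above. I expect the main obstacle to be the careful bookkeeping around half-edges: a half-edge at a vertex of degree less than $3$ behaves like an ordinary edge of $G$ for the purpose of forming star-triangles (a degree-$2$ vertex still has a half-edge to complete its triangle, a degree-$1$ vertex has two half-edges), so one must confirm that ``every vertex of $G$ yields a genuine triangle of $L(G)$ on three distinct, pairwise adjacent vertices'' even in these cases, and that distinct vertices of $G$ yield distinct (in fact edge-disjoint) triangles — this is where one uses that there are no double edges. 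A secondary, more technical point is verifying that distinct star-triangles of $X$ induce distinct $3$-colorings of $B(X)$: two such triangles $T_1 \neq T_2$ of $X$ share at most one vertex, so as vertex sets of $B^2(X)$ the triples $\{\phi_X(e)\colon e \in T_1\}$ and $\{\phi_X(e)\colon e \in T_2\}$ differ (using injectivity of $\phi_X$), and since by Lemma~\ref{lem: TriFree} each such triple is a triangle lying in a unique $3$-coloring of $B(X)$ — really each triple \emph{is} a $3$-coloring of $B(X)$ — distinctness of the triples gives distinctness of the $3$-colorings.
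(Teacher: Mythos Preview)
Your proposal is correct and follows essentially the same approach as the paper: both rely on the bijection between vertices of $G$ and triangles of $X$, the injectivity of $\phi_X$ (from edge-colorfulness), and the fact that each vertex-star $\{e,f,g\}$ yields a $3$-coloring $\{\phi_X(e),\phi_X(f),\phi_X(g)\}$ of $B(X)$. The paper organizes the argument as two separate equivalences ((ii)$\Leftrightarrow$(iii) and (i)$\Leftrightarrow$(ii)) and is much terser, whereas you prove the implications cyclically and spell out the surjectivity-on-triangles step more explicitly; your remark that every vertex and edge of $B^2(X)$ lies in a triangle is the point the paper leaves implicit, and it follows directly from the definition of the coloring complex rather than from any regularity of $X$, so you can drop the aside about $4$-regularity and half-edge degrees.
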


\begin{proof}
As argued in the proof of Lemma \ref{lem:3Col}, all partitions of $V(B(X))$ of the form $\{\phi_X(e),\allowbreak \phi_X(f),\phi_X(g)\}$ are 3-colorings of $B(X)$. This shows that (ii) and (iii) are equivalent.

Next, observe that $\phi_X: X\to B^2(X)$ is injective and that every triangle in $X$ corresponds to a vertex of $G$. Thus $X$ has precisely $n$ triangles. This yields equivalence of (i) and (ii).
\end{proof}

Finally, let us add a necessary condition for reflexivity.

\begin{lemma}
\label{lem:necessary degrees 2^d}
Let $X$ be a graph that is reflexive for 3-colorings. If $v$ is a vertex of degree $d$ in $X$, then $d=2^t$, where $t$ is the number of components of the bipartite graph $B(X)-\phi_X(v)$.
\end{lemma}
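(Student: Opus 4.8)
The plan is to compute directly the degree of the vertex $\phi_X(v)$ in $B^2(X)$, and then use that $\phi_X\colon X\to B^2(X)$ is an isomorphism (which is the definition of reflexive) to transfer the count back to $\deg_X(v)=d$.

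Write $\mathcal A:=\phi_X(v)=\{\,C\in V(B(X)):v\in C\,\}$. Since $\mathcal A$ is a vertex of $B^2(X)=B(B(X))$, it is a color class of some $3$-coloring of $B(X)$; hence $\mathcal A$ is an independent set in $B(X)$, and it is \emph{nonempty}, since $v$ lies in at least one color class of at least one $3$-coloring of $X$. The first step is the routine unwinding of adjacency two levels up: a vertex $\mathcal D$ of $B^2(X)$ is adjacent to $\mathcal A$ exactly when $\{\mathcal A,\mathcal D,\mathcal E\}$ is a $3$-coloring of $B(X)$ for some $\mathcal E$, equivalently when $\{\mathcal D,\mathcal E\}$ is a partition of $V(B(X))\setminus\mathcal A$ into two independent sets, i.e.\ a proper $2$-coloring of the induced subgraph $B(X)-\mathcal A$. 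In particular $B(X)-\mathcal A$ is bipartite, as the statement asserts.

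Now I would count the possibilities for $\mathcal D$. Let $t$ be the number of connected components of $B(X)-\mathcal A$. Each component admits exactly two proper $2$-colorings (a single vertex can go to either side; a component with an edge has a unique bipartition up to swapping the sides), and these choices are independent across components, so there are $2^t$ proper $2$-colorings in all. Choosing in each component $i$ a representative vertex $w_i$ and labelling one of its two sides $L_i$, a proper $2$-coloring is encoded by the set $S\subseteq[t]$ of components whose side $L_i$ is put into the first part; the first part is then an independent set $\mathcal B_S$, and from $w_i\in\mathcal B_S\iff i\in S$ we see that distinct $S$ yield distinct $\mathcal B_S$. Swapping the two parts of a $2$-coloring sends its first part to another set of the same form, so the collection of all independent sets that can occur as $\mathcal D$ is exactly $\{\mathcal B_S:S\subseteq[t]\}$, which has $2^t$ elements. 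Finally, no $\mathcal B_S$ can equal $\mathcal A$: since $\mathcal B_S\subseteq V(B(X))\setminus\mathcal A$, the equality $\mathcal B_S=\mathcal A$ would force $\mathcal A=\emptyset$, contradicting nonemptiness of $\mathcal A$. Hence $\phi_X(v)$ has exactly $2^t$ neighbours in $B^2(X)$, and since $\phi_X$ is an isomorphism, $d=2^t$.

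I do not anticipate a real obstacle: the entire content is in carefully peeling the two layers of the coloring-complex construction. The one place that needs attention is getting the count right — it is $2^t$, the number of \emph{classes} $\mathcal D$ adjacent to $\mathcal A$, rather than $2^{t-1}$, the number of \emph{partitions} $\{\mathcal D,\mathcal E\}$ completing $\mathcal A$ to a $3$-coloring — and noticing that the nonemptiness of $\mathcal A=\phi_X(v)$ is precisely what keeps $\mathcal A$ from appearing among its own neighbours.
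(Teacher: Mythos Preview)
Your proof is correct and follows essentially the same approach as the paper: compute the degree of $\phi_X(v)$ in $B^2(X)$ by counting proper $2$-colorings of $B(X)-\phi_X(v)$, then transfer back via the isomorphism $\phi_X$. The only cosmetic difference is that the paper counts the $2^{t-1}$ unordered $2$-colorings (equivalently, triangles through $\phi_X(v)$, invoking Lemma~\ref{lem: TriFree}) and then multiplies by~$2$, whereas you count the $2^t$ sides directly and check that none coincides with $\mathcal A$.
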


\begin{proof}
The set $\phi_X(v)$ is the color class for 3-colorings of $B(X)$, and since $B(B(X))$ is isomorphic to $X$, the number of 3-colorings of $B(X)$ that have $\phi_X(v)$ as one of the color classes is equal to $d/2$ (each coloring contributes 2 towards the degree by Lemma \ref{lem: TriFree}). Thus, $d/2$ is equal to the number of 2-colorings of $B(X)-\phi_X(v)$. In particular, this subgraph is bipartite, and it is clear that the number of 2-colorings is equal to $2^{t-1}$. Thus, $d = 2\cdot 2^{t-1} = 2^t$.
\end{proof}

When applied to edge-colorings, we obtain the following corollary.

\begin{corollary}
\label{cor:necessary 1/2 components}
Let $G$ be a cubic edge-reflexive graph and let $X=L(G)$. For each edge $e$ of $G$, the graph $B(X)-\phi_X(e)$ is bipartite, and its either connected (when $e$ is a half-edge), or has precisely two connected components (when $e$ is a full edge).
\end{corollary}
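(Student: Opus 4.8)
The plan is to derive this directly from Lemma \ref{lem:necessary degrees 2^d} by translating the degree condition on vertices of $X = L(G)$ into the language of edges of $G$. Since $G$ is edge-reflexive, $X$ is reflexive for 3-colorings, so Lemma \ref{lem:necessary degrees 2^d} applies to every vertex of $X$. The vertices of $X$ are exactly the edges of $G$, so for each edge $e$ of $G$ we get that $B(X) - \phi_X(e)$ is bipartite and that $\deg_X(e) = 2^t$, where $t$ is the number of its connected components.

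The remaining step is to compute $\deg_X(e)$ in terms of whether $e$ is a half-edge or a full edge of $G$. Here I would use that $G$ is cubic, so every vertex is incident with exactly three edges (counting half-edges). A full edge $e = uv$ is adjacent in $L(G)$ to the two other edges at $u$ and the two other edges at $v$, and since $G$ is edge-colorful it has no multiple edges, so these four edges are pairwise distinct; hence $\deg_X(e) = 4$. A half-edge $e$ is incident with a single vertex $u$ and is adjacent in $L(G)$ only to the two other edges at $u$, so $\deg_X(e) = 2$.

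Plugging these into $\deg_X(e) = 2^t$ gives $t = 1$ when $e$ is a half-edge and $t = 2$ when $e$ is a full edge, which is exactly the claimed statement. I do not anticipate a real obstacle here: the only mild subtlety is confirming the adjacencies in $L(G)$ are counted without repetition, which is where the no-multiple-edges consequence of edge-colorfulness (noted in the paragraph preceding Observation \ref{obs:triangle-free}) is invoked; everything else is a direct specialization of Lemma \ref{lem:necessary degrees 2^d}.
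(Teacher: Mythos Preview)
Your argument is correct and matches the paper's intended derivation: the paper states this corollary immediately after Lemma~\ref{lem:necessary degrees 2^d} with the remark ``When applied to edge-colorings, we obtain the following corollary,'' leaving exactly the degree computation you carry out. The only minor remark is that the absence of multiple edges is in fact a standing convention in the paper rather than something you need to deduce from edge-colorfulness, but your justification is also valid.
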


\section{Outerplanar graphs}

\subsection{Overview}

Throughout the remainder of this article, we will assume that cubic graphs $G$ satisfy the hypotheses of Theorem~\ref{thm: Main} below: they are always connected, and they may have half-edges.
Recall that a graph $G$ is \emph{outerplanar} if $G$ has a planar drawing in which all of its vertices appear on the unbounded face of the drawing. We will consider cubic triangle-free outerplanar graphs. Let us observe that all such graphs must have at least four half-edges.

Fiorini \cite{FIORINI} proved that every cubic outerplanar graph is 3-edge-colorable. The same proof shows that these graphs are edge-colorful whenever they are triangle-free.

The main result of this paper is the following somewhat surprising theorem, whose proof occupies the rest of this section.

\begin{theorem} \label{thm: Main}
Let $G$ be a connected cubic outerplanar graph.  Then $G$ is edge-reflexive if and only if it is triangle-free.
\end{theorem}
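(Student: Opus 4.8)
The plan is to prove the two directions separately. The forward (easy) direction follows immediately from Observation~\ref{obs:triangle-free} combined with Observation~\ref{obs:colorful}: if $G$ contains a triangle then $L(G)$ is not colorful, hence $\phi_X$ is not injective and $G$ cannot be edge-reflexive. So the entire work lies in the converse: assuming $G$ is a connected cubic triangle-free outerplanar graph, we must show $L(G)$ is reflexive. By Fiorini's theorem (as extended in the text) such $G$ is edge-colorful, so by Lemma~\ref{lem:edge-reflexive-countC3} it suffices to show condition~(ii): $B(X)$ has precisely $n = |V(G)|$ three-colorings, where $X = L(G)$. Equivalently, by condition~(iii), every $3$-coloring of $B(X)$ is induced by a vertex of $G$ via the partition $\{\phi_X(e),\phi_X(f),\phi_X(g)\}$ of the three edges at that vertex.

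The main approach I would take is a structural induction on the outerplanar graph $G$, exploiting the recursive structure of cubic triangle-free outerplanar graphs. Such a graph, drawn with all vertices on the outer face, has a weak dual that is a tree; I would peel off a ``leaf'' of this structure — an innermost face $F$ bounded by one chord and a path of outer edges — and describe how the set of $3$-edge-colorings of $G$, and hence the structure of $B(X)$, decomposes relative to the colorings of the smaller graph $G'$ obtained by contracting/deleting along $F$. The key invariant to maintain is a precise description of $B(X)$: not merely its number of colorings but enough of its combinatorial structure (which color classes are ``forced together'', how the triangles hang off one another, connectivity of $B(X) - \phi_X(e)$ as in Corollary~\ref{cor:necessary 1/2 components}) so that the inductive step can be carried out. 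A clean way to organize this is to first handle the base cases — cubic paths and cubic cycles (including the added half-edges) — for which Fisk already established reflexivity, and then show that attaching an ear/face in the outerplanar decomposition sends a reflexive line graph to a reflexive line graph.

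The hard part, I expect, will be controlling how a single $3$-coloring of the \emph{complex} $B(X)$ restricts to the sub-complex coming from $G'$ and arguing that it cannot ``mix'' color classes across the attached face in a way that produces a spurious coloring not of the form $\{\phi_X(e),\phi_X(f),\phi_X(g)\}$. Because $B(X)$ can be large even when $G$ is small, a coloring of $B(X)$ has a priori much more freedom than a vertex of $G$, and the whole point of the theorem is that outerplanarity plus triangle-freeness rigidifies this; pinning down exactly which feature of outerplanar $G$ forces the rigidity — presumably that the Kempe-chain structure of $3$-edge-colorings is highly constrained, so that any bipartition of $V(B(X))$ into a valid $3$-coloring is ``local'' — is the crux. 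I would also need, as a technical lemma, a good handle on $B(X)$ itself for these graphs: describing its vertices (color classes, i.e. perfect-matching-like objects in the half-edged cubic graph) and its triangle structure explicitly, since reflexivity is ultimately the statement that $B(B(X)) \cong X$, and one wants to recognize $X = L(G)$ back inside $B^2(X)$ by finding exactly $n$ triangles in $B(X)$ and no more.
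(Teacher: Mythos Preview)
Your forward direction is correct and matches the paper. Your inductive strategy for the converse is in the right spirit but remains an outline: you explicitly flag ``the hard part'' as unresolved, and that is indeed where essentially all of the content lies.

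Two concrete gaps. First, you do not handle cutedges. ``Peeling off a leaf face'' of the weak dual only makes sense once $G$ is $2$-connected; the paper treats this separately via Lemma~\ref{lem:gluinghalfedges} (if both pieces obtained by cutting a cutedge are edge-reflexive then so is $G$), which is itself a nontrivial argument and yields Corollary~\ref{cor:blocks}. Second, and more seriously, in the $2$-connected case the paper does not attempt a general ``attach a face'' step. Instead it isolates two specific operations---adding a $4$-cycle at a pair of adjacent half-edges, and subdividing an edge flanked by half-edges---from which every $2$-connected triangle-free cubic outerplanar graph is built starting from the cubic $4$-cycle, and proves separately (Lemmas~\ref{lem:op1} and \ref{lem: subdiv}) that each preserves edge-reflexivity. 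Those proofs work by partitioning $V(B(X))$ into finitely many \emph{clusters} $\mathcal{C}_{x_1\cdots x_t}=\bigcap_i \phi_X(x_i)$ indexed by the local edges, identifying unions of the resulting triangle-types $\mathcal{T}_i$ with copies of $B(Y)$ for the smaller graph $Y$, and then case-splitting any $3$-coloring $\chi$ of $B(X)$ according to whether $\chi$ is constant on certain clusters. The propagation of determinacy uses Lemma~\ref{lem:Determined} together with the connectivity statements of Corollary~\ref{cor:necessary 1/2 components} (which is exactly the ``structural invariant beyond mere counting'' you anticipated needing). Without this cluster decomposition and the constant/non-constant case analysis, the argument cannot be completed; your proposal identifies the shape of the induction but not the mechanism that makes the inductive step go through.
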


The nontrivial direction of Theorem \ref{thm: Main} will be proved in three steps.
First, we will reduce the problem to 2-connected graphs.
Then we will exhibit two operations which are used to construct all 2-connected cubic triangle-free outerplanar graphs from a 4-cycle.
Finally, we will show that edge-reflexivity is preserved under both of these operations, completing our argument that any connected, triangle-free, outerplanar graph $G$ is edge-reflexive.

Before proceeding, we will need to establish a brief lemma, for which we need to introduce a definition.
Suppose that $S\subseteq V(X)$ and that we have a 3-coloring $\Gamma = \{A,B,C\}$ of $X[S]$. A coloring $\{A',B',C'\}$ of $X$ is an \emph{extension} of the coloring $\Gamma$ if $A\subseteq A'$, $B\subseteq B'$, and $C\subseteq C'$ (or some permutation of color classes satisfies the same inclusion relations). If $v$ is a vertex and for every extension of $\Gamma$, $v$ is in the color class containing the same color class in $\Gamma$, then we say that \emph{the color of $v$ is determined} by $\Gamma$ (or that the coloring $\Gamma$ \emph{determines the color} of $v$). If colors of all vertices are determined by $\Gamma$ and $\Gamma$ has an extension, we say that $\Gamma$ \emph{determines} the extension (or that $\Gamma$ \emph{extends uniquely} to $X$).

\begin{lemma} \label{lem:Determined}
Let $F$ be a graph with a 3-coloring whose color classes are $A$, $B$ and $C$.  Suppose that $F$ has no isolated vertices, that every edge of $F$ is contained in exactly one triangle, and that $F[B \cup C]$ is connected.  Then any $3$-coloring of $F$ that uses at least two colors on $A$ is determined by its restriction to $A$.
\end{lemma}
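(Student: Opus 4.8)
The plan is to show that a $3$-coloring $\Gamma'$ of $F$ that restricts to $\Gamma = \{A,B,C\}$ on $A$ and uses at least two colors on $A$ is forced, vertex by vertex, on $B\cup C$. Write $\Gamma' = \{A',B',C'\}$ with $A \subseteq A'$; since $\Gamma'$ uses at least two colors on $A$, neither $B$ nor $C$ is contained in $A'$, so after relabeling we may assume $B\cap B'\ne\emptyset$ and $C\cap C'\ne\emptyset$. I will argue that in fact $B\subseteq B'$ and $C\subseteq C'$, which (together with $A\subseteq A'$ and the fact that $A,B,C$ and $A',B',C'$ both partition $V(F)$) gives $B=B'\setminus A'$-type equalities forcing $\Gamma'$ to agree with $\Gamma$ entirely on $B\cup C$; this is exactly the assertion that $\Gamma'$ is determined by its restriction to $A$.

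The key local step uses the hypothesis that every edge lies in exactly one triangle. Take any vertex $v\in B\cup C$, say $v\in B$. Since $F$ has no isolated vertices, $v$ has a neighbor, and because $B$ is independent in the $3$-coloring $\Gamma$, that neighbor lies in $A\cup C$; in fact I would like a neighbor whose $\Gamma'$-color is already pinned down. The cleanest way is to use a triangle on an edge at $v$: pick an edge $vw$ of $F$; it lies in a unique triangle $vwx$, and $\{v,w,x\}$ receives all three colors under $\Gamma$, hence also all three (distinct) colors under $\Gamma'$. So if I know the $\Gamma'$-colors of two vertices of such a triangle, the third is determined. This gives a propagation rule along triangles: knowing the $\Gamma'$-colors on one vertex of each triangle is not enough, but knowing them on a connected "skeleton" is.

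The mechanism to spread the information is the connectivity of $F[B\cup C]$. I will induct along a spanning connected subgraph of $F[B\cup C]$: start from a vertex where the color is already known to agree with $\Gamma$ (for instance, take $b\in B$ with $b\in B'$ and $c\in C$ with $c\in C'$, which exist by the relabeling above, and note $bc$ — if it is an edge — lies in a triangle $bcx$ with $x$ forced into $A'$). Then, moving along an edge $uv$ of $F[B\cup C]$ with $\{u,v\}=\{B\text{-vertex},C\text{-vertex}\}$ whose colors under $\Gamma'$ we have already shown to be the $\Gamma$-colors, the unique triangle $uvx$ on that edge has its third vertex $x\in A$, so $x\in A'$; since $u,v,x$ get three distinct $\Gamma'$-colors and $x\in A'$, we conclude $\{$ the $\Gamma'$-colors of $u,v\}=\{B',C'\}$, and combined with the inductive knowledge of one of them, the other is pinned to the correct class. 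Iterating over all edges of a spanning tree of $F[B\cup C]$ forces every vertex of $B\cup C$ into the $\Gamma$-correct class of $\Gamma'$, which is what we want. The remaining vertices (those in $A$) are correct by hypothesis, so $\Gamma'$ is determined.

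The main obstacle is bookkeeping the base case and the "relabeling" cleanly: one must be sure that the two-colors-on-$A$ hypothesis genuinely forces the correspondence $B\leftrightarrow B'$, $C\leftrightarrow C'$ on at least one edge of $F[B\cup C]$ so that the induction has somewhere to start, and one must handle the possibility that $A$ could be empty or that $F[B\cup C]$ has isolated vertices — but the latter cannot happen since such a vertex would be isolated in $F$ or have all neighbors in $A$, and the triangle-on-every-edge hypothesis rules out a vertex all of whose incident edges go to $A$ unless it still lies in a triangle with another $B\cup C$ vertex. Verifying that these degenerate cases either do not arise or are trivial is the one place care is needed; the propagation argument itself is routine once the triangle-uniqueness rule is set up.
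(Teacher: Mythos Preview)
Your proposal rests on a misreading of the statement. You write ``$\Gamma' = \{A',B',C'\}$ with $A \subseteq A'$; since $\Gamma'$ uses at least two colors on $A$\ldots'', but these two clauses are mutually exclusive: if $A \subseteq A'$ then $\Gamma'$ uses exactly one color on $A$. The hypothesis is precisely the opposite --- the new coloring is \emph{not} constant on $A$. Consequently your target ``$B \subseteq B'$ and $C \subseteq C'$'' is not what is being asserted (and is generally false): the lemma does not claim that the new coloring equals the old one, only that whatever the new coloring does on $A$ (which may be arbitrary, subject to using at least two colors), it is then forced on $B\cup C$.

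Your propagation mechanism along edges of $F[B\cup C]$ via unique triangles is the right local step and matches the paper's second phase. What is missing is a correct base case: you need a single vertex $z\in B\cup C$ whose $\Gamma'$-color is determined by $\Gamma'|_A$ alone. The paper obtains this by taking $x_0,y\in A$ with distinct $\Gamma'$-colors, choosing neighbours $v,w$ of $x_0,y$ in $B\cup C$, and walking along a path $v=v_1\cdots v_n=w$ in $F[B\cup C]$. Each edge $v_iv_{i+1}$ lies in a unique triangle whose third vertex $x_i$ is in $A$; since $x_0$ and $x_n=y$ have different $\Gamma'$-colors, some consecutive pair $x_{j-1},x_j$ does too, which pins the $\Gamma'$-color of $v_j$. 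From there your propagation argument (now aiming only to show each color is \emph{determined}, not that it agrees with $\Gamma$) goes through.
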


\begin{proof}
Let $x_0$ and $y$ be two vertices of $A$ which have different colors, and let $v, w \in B$ be neighbours of $x_{0}$ and $y$, respectively.
Let $P = v_1 v_2 \ldots v_n$ be a path in $F[B \cup C]$ with $v_1 = v$ and $v_n = w$.
For each $i \in [n-1]$, let $x_i \in A$ be the common neighbour of $v_i$ and $v_{i+1}$.
Relabel $y$ as $x_n$.
Let $j \in [n]$ be such that $x_j$ and $x_{j-1}$ have different colors.
Such an index exists because $x_{0}$ and $x_{n}$ have different colors.
Then $v_j$ is adjacent to both of these vertices, and hence its color is determined.
Relabel $v_j$ as $z$.

Now let $u$ be any vertex in $B \cup C$.
Let $Q = u_1 u_2 \ldots u_m$ be a path in $F[B \cup C]$ with $z = u_1$ and $u = u_m$.
The color of $u_1$ is determined, and whenever the color of $u_i$ is determined, so is the color of $u_{i+1}$
(since it is in a triangle with $u_i$ and a vertex of $A$, both of whose colors are determined).
By induction the color of $u = u_m$ is determined.
Since $u$ was arbitrary, the entire coloring is determined.
\end{proof}

\subsection{Cutedges and reflexivity}

In this section, we will prove that a cubic graph $G$ is edge-reflexive whenever each block of $G$ is edge-reflexive.  Thus, we will be able to reduce edge-reflexivity questions to 2-edge-connected graphs.

Let $G$ be a cubic graph with a cutedge $e$ joining vertices $a$ and $b$. By \emph{cutting the edge} $e$ we obtain two cubic graphs $H$ and $K$ which are obtained from $G-e$ by adding a half-edge to $a$ and $b$, respectively. The added half-edge will be considered to be the same as the removed edge $e$, so that we can consider $E(H)\subseteq E(G)$, $E(K)\subseteq E(G)$, and $E(H)\cap E(K) = \{e\}$.

\begin{lemma}\label{lem:gluinghalfedges}
Let $G$ be a cubic graph with a cutedge $e$. Let $H$ and $K$ be cubic graphs obtained from $G$ by cutting the edge $e$.
If $H$ and $K$ are edge-reflexive, then $G$ is edge-reflexive, too.
\end{lemma}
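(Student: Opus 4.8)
The plan is to understand the structure of $B(L(G))$ in terms of $B(L(H))$ and $B(L(K))$, and then to count the $3$-colorings of $B(L(G))$ using Lemma~\ref{lem:edge-reflexive-countC3}(ii). Write $X = L(G)$, $X_H = L(H)$, $X_K = L(K)$, and let $a,b$ be the endpoints of the cutedge $e$, so that in $X$ the edges of $G$ at $a$ form a triangle $T_a$ and those at $b$ form a triangle $T_b$, with $e$ the unique common vertex of $T_a$ and $T_b$. A $3$-edge-coloring of $G$ restricts to a $3$-edge-coloring of $H$ and one of $K$ that agree on the color of $e$; conversely any such pair glues, since the colors at $a$ (resp.\ $b$) are forced to be all three colors and the only shared edge is $e$. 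So $3$-edge-colorings of $G$ correspond to pairs $(\gamma_H,\gamma_K)$ of colorings of $H$ and $K$ with $\gamma_H(e)=\gamma_K(e)$. This gives a concrete description of $V(B(X))$: a color class of $G$ is either a color class of $H$ not containing $e$, or a color class of $K$ not containing $e$, or the union $C_H \cup C_K$ of a color class $C_H$ of $H$ containing $e$ with a color class $C_K$ of $K$ containing $e$.

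First I would make this correspondence precise and identify the ``interface'' inside $B(X)$. Let $\mathcal{E}_H \subseteq V(B(X_H))$ be the color classes of $H$ containing $e$, and similarly $\mathcal{E}_K$; these are exactly the color classes $\phi_{X_H}(e)$ split up by which $3$-coloring they sit in — equivalently, $\mathcal{E}_H$ is the neighborhood-structure of the vertex $\phi_{X_H}(e)$ in $B^2(X_H)$, which since $H$ is edge-reflexive is a single vertex of $H$, namely the vertex $a$ together with its two other incident edges. Concretely, since $H$ is edge-reflexive, Corollary~\ref{cor:necessary 1/2 components} tells us $B(X_H) - \phi_{X_H}(e)$ is connected and bipartite, and the two sides of its bipartition are exactly the two color classes $\neq \phi_{X_H}(e)$ appearing with $e$ across all colorings — i.e.\ $\mathcal{E}_H$ splits as a bipartition-respecting structure. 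I would then argue that $B(X)$ is obtained by taking $B(X_H) - \phi_{X_H}(e)$ and $B(X_K) - \phi_{X_K}(e)$ and, for each compatible pair of colorings (those giving the same color to $e$), merging the relevant pair of color classes $C_H\cup C_K$ into a new vertex, together with adding the vertices $C_H\cup C_K$ themselves and the triangles they complete. The upshot I want is: $B(X) - \{\text{the new merged vertices}\}$ decomposes cleanly, and each merged vertex $\phi_X(f)$ for the two non-$e$ edges $f$ at $a$ (resp.\ at $b$) plays the role that $\phi_{X_H}(\cdot)$ played.

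The cleanest route to finish is then via Lemma~\ref{lem:edge-reflexive-countC3}(ii): count the $3$-colorings of $B(X)$. Take any $3$-coloring $\{\mathcal A,\mathcal B,\mathcal C\}$ of $B(X)$. I would restrict it to the part of $B(X)$ coming from $H$ (i.e.\ collapse each merged vertex $C_H\cup C_K$ back to $C_H$, and restore $\phi_{X_H}(e)$ as the ``collapsed'' vertex) and show this restriction is a $3$-coloring of $B(X_H)$; since $H$ is edge-reflexive, it must be one of the $|V(H)|$ colorings coming from a vertex of $H$ via $\phi_{X_H}$. Similarly on the $K$ side. Lemma~\ref{lem:Determined} is the tool that makes the restriction well-defined and shows the two sides are forced to be compatible: the edge-set $T_a$ (the triangle at $a$) in $X$ is a triangle each of whose edges lies in exactly one triangle of $B(X)$, and the connectivity hypothesis needed there is supplied by Corollary~\ref{cor:necessary 1/2 components} applied inside $H$. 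Matching up the two resulting vertices — one of $H$, one of $K$ — yields either a single vertex of $G$ (when they are $a$ and $b$, or the same internal vertex seen from both sides) or nothing, and I would check the bookkeeping gives exactly $|V(G)|$ colorings of $B(X)$, no more.

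The main obstacle I anticipate is precisely this last gluing/counting step: verifying that a $3$-coloring of $B(X)$ cannot ``mix'' incompatibly across the interface — that is, ruling out colorings of $B(X)$ whose $H$-restriction comes from vertex $u\in V(H)$ and whose $K$-restriction comes from $w\in V(K)$ with $\{u,w\}$ not corresponding to a single vertex of $G$. This is where one must use that the merged vertices $C_H\cup C_K$ carry edges into \emph{both} sides, so a $3$-coloring of $B(X)$ simultaneously constrains the $H$-side and the $K$-side through the shared triangle $T_e$ (the edges at $e$), forcing the color assigned to $\phi_X(e)$-type classes to be consistent; Lemma~\ref{lem:Determined} with $A$ the appropriate small set should pin this down, but the case analysis (half-edge vs.\ full edge $e$, and which of the $\le 3$ color classes through $e$ gets which of $\mathcal A,\mathcal B,\mathcal C$) is the delicate part. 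A secondary nuisance is the degenerate possibility that $H$ or $K$ is a single cubic vertex (when $G$ has a pendant-like structure near $e$), which should be handled as an easy base case where $B(X_H)$ is a triangle.
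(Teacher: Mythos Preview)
Your overall strategy---decompose $B(L(G))$ using the cut at $e$ and then count its 3-colorings via Lemma~\ref{lem:edge-reflexive-countC3}(ii)---matches the paper's, and you correctly identify Corollary~\ref{cor:necessary 1/2 components} and Lemma~\ref{lem:Determined} as the key tools. However, the specific mechanism you propose has a genuine gap.

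You plan to take a 3-coloring $\chi$ of $B(X)$, ``collapse each merged vertex $C_H\cup C_K$ back to $C_H$,'' and obtain a 3-coloring of $B(X_H)$; then do the same on the $K$-side. The problem is that this collapse is \emph{not always well-defined}: for a fixed $C_H\in\phi_{X_H}(e)$, the merged vertices $\{C_H\cup C_K : C_K\in\phi_{X_K}(e)\}$ need not receive a single color under $\chi$. The paper makes exactly this distinction. It organizes $V(B(X'))$ into clusters $\mathcal F_C$ indexed by $C\in V(B(X_H))$ and splits the 3-colorings of $B(X')$ into three classes: (1) those constant on some $\mathcal F_C$ with $e\notin C$, which are precisely the colorings that \emph{do} factor through $B(X_H)$ and hence give the $|V(H)|$ colorings you expect; (2) a single ``near-constant'' coloring, constant on each $\mathcal F_A^e$ but not on any $\mathcal F_C$; and (3) colorings non-constant on every $\mathcal F_A^e$, which are shown (using Lemma~\ref{lem:Determined} repeatedly along $B(X_H)$) to be determined by their restriction to one copy of $B(X_K)$ sitting over a triangle of $B(X_H)$, yielding $|V(K)|-1$ colorings. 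The total $|V(H)|+1+(|V(K)|-1)=|V(G)|$ comes out only after this asymmetric three-way split; your symmetric ``restrict to both sides and match up a vertex of $H$ with a vertex of $K$'' does not survive Classes~2 and~3, where at least one restriction fails to collapse. The obstacle you flag (incompatible pairs $(u,w)$) is downstream of, and easier than, this issue; the real work is showing that non-constant colorings are nonetheless \emph{determined} by data equivalent to a single coloring of $B(X_K)$, which is where Claims~3 and~4 of the paper (propagation of constancy and near-constancy through adjacent clusters) do the heavy lifting.
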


\begin{proof}
We let $X=L(H)$, $Y=L(K)$, and $X'=L(G)$.
Let $w$ and $x$ be the neighbours of $e$ in $X$, and let $y$ and $z$ be the neighbours of $e$ in $Y$.
Before proceeding further, let us first observe that since $X$ and $Y$ are reflexive, they are also colorful. This implies that $X'$ is also colorful.  Thus, $\phi_{X'}$ is an injective homomorphism $X' \rightarrow B^{2}(X')$, and therefore it suffices to show that $X'$ and $B^{2}(X')$ have the same number of triangles. Triangles in $X'$ correspond to vertices in $V(G) = V(H)\cup V(K)$, while the triangles in $B^2(X')$ correspond to 3-colorings of $B(X')$.

Now, for any color class $C \in V(B(X))$, let $\mathcal{F}_C$ be the set of color classes of $X'$ which coincide with $C$ on the vertices of $X$. If $e\in C$, we also write $\mathcal{F}_C^e$ to denote the same set of color classes.
Observe that when $e \in C$ and $D \in \mathcal{F}_C$, we have $y, z \notin D$.
On the other hand, if $e \notin C$ then exactly one of $y$ and $z$ is in $D$.
In this case, we partition $\mathcal{F}_C$ into the subset $\mathcal{F}_{C}^{y}$ consisting of color classes which contain $y$, and the subset $\mathcal{F}_{C}^{z}$ of color classes containing $z$.
We refer to the set $\mathcal{F}_{C}^{e}$, or the sets $\mathcal{F}_{C}^{y}$ and $\mathcal{F}_{C}^{z}$ as the \emph{clusters} of $B(X')$ corresponding to $C$.
Each cluster is an independent set in $B(X')$.
We say that two clusters corresponding to $C, C' \in V(B(X))$ are \emph{adjacent} if $CC' \in E(B(X))$.

\medskip
\begin{1claim}
For any triangle $ACD$ in $B(X)$ with $e \in A$, the induced subgraph of $B(X')$ on $\mathcal{F}_{A}^{e} \cup \mathcal{F}_{C}^{y} \cup \mathcal{F}_{D}^{z}$ is isomorphic to $B(Y)$.  The same is true for $\mathcal{F}_{A}^{e} \cup \mathcal{F}_{C}^{z} \cup \mathcal{F}_{D}^{y}$.
\end{1claim}
\medskip
\begin{claimproof}
To prove the claim, consider the map $\psi : \mathcal{F}_{A}^{e} \cup \mathcal{F}_{C}^{y} \cup \mathcal{F}_{D}^{z} \rightarrow B(Y)$ which takes a color class $B$ of $X'$ to the intersection of $B$ with $V(Y)$.
Since the restriction of $B$ to $V(X)$ is identical to precisely one of $A, C$ or $D$ it is easy to see that $\psi$ is injective.
Further, we can form a color class of $X'$ from any color class of $Y$ by combining it with one of $A$, $C$ and $D$, provided that they agree on containment of $v$;
hence $\psi$ is surjective.

It remains to show that, for $B,B' \in \mathcal{F}_{A}^{e} \cup \mathcal{F}_{C}^{y} \cup \mathcal{F}_{D}^{z}$, $B$ is adjacent to $B'$ if and only if $\psi(B)$ is adjacent to $\psi(B')$.
Suppose that $B \in \mathcal{F}_{A}^{e}$ and $B' \in \mathcal{F}_{C}^{y}$.
If $B$ and $B'$ are adjacent in $B(X')$ then there is a color class $F \in B(X')$ such that $BB'F$ is a triangle in $B(X')$.
Note that $F \in \mathcal{F}_{D}^{z}$.
The image of this triangle under $\psi$ is also a triangle, and hence $\psi(B)$ is adjacent to $\psi(B')$.
On the other hand, if $\psi(B)$ and $\psi(B')$ are adjacent in $B(Y)$, then there is a triangle $\psi(B) \psi(B') J \subseteq B(Y)$ since all edges in $B(Y)$ come from 3-colorings of $Y$.
Now $B, B'$ and $D \cup J$ form a coloring of $X'$, and hence $B$ and $B'$ are adjacent in $B(X')$.
The proofs for the cases when $B \in \mathcal{F}_{A}^{e}, B' \in \mathcal{F}_{D}^{z}$ and $B \in \mathcal{F}_{C}^{y}, B' \in \mathcal{F}_{D}^{z}$ are similar.
This proves the claim.
\end{claimproof}
\medskip

\begin{1claim2}
The subgraph of $B(X')$ on color classes in $\mathcal{F}_{D}^{z} \cup \mathcal{F}_{C}^{y}$ is connected and bipartite, and is isomorphic to $B(Y) - \phi_Y(e)$.
Further, the graph $B^{*}$ obtained from $B(Y)$ by deleting the edges of $B(Y) - \phi_Y(e)$ is connected and isomorphic to each of the bipartite graphs between $\mathcal{F}_{A}^{e}$ and $\mathcal{F}_C$, as well as between $\mathcal{F}_{A}^{e}$ and $\mathcal{F}_{C}^{y} \cup \mathcal{F}_{D}^{z}$.
\end{1claim2}
\medskip
\begin{claimproof}
$B(Y) - \phi_Y(e)$ is connected by Corollary \ref{cor:necessary 1/2 components}.
Furthermore, every edge of $B(Y) - \phi_Y(e)$ is contained in a triangle in $B(Y)$ using no other edge of $B(Y) - \phi_Y(e)$.
Therefore, $B^{*}$ is also connected since the edges of such triangles can be used in $B^*$ to replace any of the removed the edges.
Now, suppose that $ACD$ is a triangle in $B(X)$, where $e \in A$.
Then the bipartite graph between $\mathcal{F}_{D}^{z}$ and $\mathcal{F}_{C}^{y}$ is isomorphic to $B(Y) - \phi_Y(e)$,
while the bipartite graphs between $\mathcal{F}_{A}^{e}$ and $\mathcal{F}_C$ and between $\mathcal{F}_{A}^{e}$ and $\mathcal{F}_{C}^{y}\cup \mathcal{F}_{D}^{z}$ are each isomorphic to $B^{*}$, in the latter case, by Claim 1.
Hence both of these are also connected, establishing the claim.
\end{claimproof}
\medskip

We say that a coloring is \emph{constant} on a set $S$ of vertices if $S$ is contained in a color class of the coloring.
If a coloring $\chi$ of $B(X')$ is constant on each of $\mathcal{F}_{C}^{y}$ and $\mathcal{F}_{C}^{z}$, we say that $\chi$ is \emph{near-constant} on $\mathcal{F}_C$.
\medskip

\begin{1claim3}
Let $ACD$ be a triangle in $B(X)$, where $e \in A$,
and let $\chi$ be a coloring of $B(X')$ that is constant on one of $\mathcal{F}_C$ or $\mathcal{F}_D$.
Then $\chi$ is constant on each of $\mathcal{F}_{A}^{e}$, $\mathcal{F}_C$ and $\mathcal{F}_D$.
\end{1claim3}
\medskip
\begin{claimproof}
If $\chi$ is constant on $\mathcal{F}_C$, then $\mathcal{F}_{A}^{e}$ and $\mathcal{F}_D$ form a connected bipartite graph on which $\chi$ uses only two colors;
hence $\chi$ is constant on each of $\mathcal{F}_{A}^{e}$ and $\mathcal{F}_D$.
The case where $\chi$ is constant on $\mathcal{F}_D$ is similar.
This proves the claim.
\end{claimproof}
\medskip

\begin{1claim4}
If $\chi$ is constant on $\mathcal{F}_{A}^{e}$, then it is near-constant on $\mathcal{F}_{C}$ and $\mathcal{F}_{D}$.  If $\chi$ is near-constant on one of $\mathcal{F}_C$ and $\mathcal{F}_D$, then it is also near-constant on the other, together using only two colors on $\mathcal{F}_C \cup \mathcal{F}_D$, and is constant on $\mathcal{F}_{A}^{e}$, using the third color.
\end{1claim4}
\medskip

\begin{claimproof}
To prove the claim, we first observe that if $\chi$ is constant on $\mathcal{F}_{A}^{e}$, then $\mathcal{F}_{C}^{y}$ and $\mathcal{F}_{D}^{z}$ form a connected bipartite graph on which $\chi$ uses only two colors.
Hence, $\chi$ is constant on each of $\mathcal{F}_{C}^{y}$ and $\mathcal{F}_{D}^{z}$ (and, similarly, $\mathcal{F}_{C}^{z}$ and $\mathcal{F}_{D}^{y}$).
It follows immediately that $\chi$ is near-constant on $\mathcal{F}_{C}$ and $\mathcal{F}_{D}$.

Suppose now that $\chi$ is near-constant on $\mathcal{F}_C$. By Claim~3, we may assume that $\chi$ is not constant on ${\mathcal F}_C$, so it uses different colors on $\mathcal{F}_{C}^{y}$ and $\mathcal{F}_{D}^{z}$. Then there is only one color left for the whole of $\mathcal{F}_{A}^{e}$ (noting from Claim 2 that every vertex of $\mathcal{F}_{A}^{e}$ is adjacent to some vertex in each of $\mathcal{F}_{C}^{y}$ and $\mathcal{F}_{C}^{z}$).
So $\chi$ is constant on $\mathcal{F}_{A}^{e}$, and hence near-constant on $\mathcal{F}_D$.
Again, the case where $\chi$ is near-constant on $\mathcal{F}_D$ is similar.
This completes the proof of the claim.
\end{claimproof}

\medskip

Our next goal is to count 3-colorings of $B(X')$. All such colorings are partitioned into three classes.

\medskip

Class 1: Colorings that are constant on $\mathcal{F}_C$ for some $C \in V(B(X)) \setminus \phi_X(e)$.
By Corollary \ref{cor:necessary 1/2 components}, $B(X) - \phi_{X}(e)$ is connected.
This fact, repeatedly combined with Claim 3, implies that such a coloring $\chi$ is constant on $\mathcal{F}_{A}^{e}$ for every $A \in \phi_{X}(e)$.
Note that every coloring of $B(X)$ gives rise to a coloring of $B(X')$ in the obvious way: If $\mathcal{D} \in B^2(X)$ is a color class in a coloring of $B(X)$, we let $\mathcal{D}' = \bigcup_{D\in \mathcal{D}} \mathcal{F}_D \in B^2(X')$. This correspondence yields a bijection between colorings of $B(X)$ and the colorings of $B(X')$ that are of Class 1.
In particular, the number of colorings of Class 1 is equal to the number of triangles of $X$, since $X$ is reflexive. Of course, this is equal to the number of vertices of $H$.
\medskip

Class 2: Colorings that are constant on $\mathcal{F}_{A}^{e}$ for some $A \in \phi_X(e)$, but not on $\mathcal{F}_{C}$ for any $C \in V(B(X)) \setminus \phi_X(e) $.
In this case, Claim 4 implies that any such coloring is near-constant on $\mathcal{F}_C$ for each $C \in V(B(X)) \setminus \phi_X(e)$.
Since $B(X)  - \phi_X(e)$ is connected, the same two colors are used for every $\mathcal{F}_C$ and the color of each $\mathcal{F}_{C}^{y}$ and $\mathcal{F}_{C}^{z}$ are determined by the choice of any one of them.
We can therefore construct exactly one coloring of $B(X')$ in this way.
\medskip

Class 3: Colorings that are non-constant on each $\mathcal{F}_{A}^{e}$, $A \in \phi_X(e)$.
In this case, we will show that any such coloring is completely determined by its restriction to an arbitrarily chosen triangle of clusters $\mathcal{F}_{A}^{e}, \mathcal{F}_{C}^{y}, \mathcal{F}_{D}^{z}$.
The vertices of these clusters can be colored according to any coloring of $B(Y)$, except for the coloring in which the clusters are themselves color classes.
Therefore, the number of colorings covered by this case is one less than the number of triangles in $Y$.

Suppose now a coloring $\chi$ of $B(X')$ of class 3 and consider its restriction to $\mathcal{F}_{A}^{e} \cup \mathcal{F}_{C}^{y} \cup \mathcal{F}_{D}^{z}$. Our aim is to prove that this restriction determines $\chi$.
To see this, we employ an inductive argument.
First observe, by Lemma~\ref{lem:Determined}, that $\chi$ is determined on $\mathcal{F}_{D}^{y}$ and $\mathcal{F}_{C}^{z}$, since these clusters form a connected bipartite graph.
Moreover, by Claim 4, neither $\mathcal{F}_{C}$ nor $\mathcal{F}_{D}$ may be near-constant.
Similarly, $\chi$ is determined on any clusters adjacent to $\mathcal{F}_{A}^{e}$.
Again, by Lemma~\ref{lem:Determined}, noting that $\chi$ is not near-constant on $\mathcal{F}_{C}$, $\chi$ is also determined on $\mathcal{F}_{B}^{e}$ for any $B \in \phi_X(e)$ which is adjacent to $C$, since the bipartite graph between $\mathcal{F}_{B}^{e}$ and $\mathcal{F}_F$ is connected, where $F$ is the third vertex of the triangle in $B(X)$ containing $B$ and $C$.
Similarly to the argument for $\mathcal{F}_{A}^{e}$, $\chi$ is also determined on $\mathcal{F}_{F}$.
Repeating this argument, since $B(X)$ is connected, we conclude that we have uniquely determined the whole of the coloring $\chi$.

By the preceding cases, in total, the number of colorings of $B(X')$, and therefore the number of triangles in $B^2(X')$,
is equal to the sum of the number of triangles of $X$ and $Y$.
Since $X'$ also has this many triangles, we conclude by Lemma \ref{lem:edge-reflexive-countC3} that the injection $\phi_{X'} : X' \rightarrow B^2(X')$ is in fact an isomorphism.
\end{proof}

As a direct corollary of Lemma \ref{lem:gluinghalfedges} we obtain a theorem of Fisk \cite{Fiskbook}.

\begin{corollary}[Fisk \cite{Fiskbook}]\label{cor:Fisktrees}
Every cubic tree is edge-reflexive.
\end{corollary}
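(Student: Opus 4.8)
\textbf{Proof proposal for Corollary \ref{cor:Fisktrees}.}

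The plan is to derive this as an immediate consequence of Lemma \ref{lem:gluinghalfedges} together with Fisk's earlier result (mentioned in the introduction) that cubic paths are edge-reflexive, by induction on the number of vertices of the cubic tree $T$. First I would observe that a cubic tree on $n \le 2$ vertices is either a single cubic vertex or a cubic edge joining two vertices, each with two half-edges; these are exactly the smallest cubic paths, which are edge-reflexive by Fisk's theorem. (Alternatively one could take the single cubic vertex as the base case, noting that its line graph is a triangle $K_3$, and check reflexivity directly.) So the base case is in hand.

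For the inductive step, let $T$ be a cubic tree with at least one full edge. Since $T$ is a tree, every full edge of $T$ is a cutedge. Pick any full edge $e$ and cut it as in the setup preceding Lemma \ref{lem:gluinghalfedges}, obtaining two cubic graphs $H$ and $K$ from $T-e$ by adding a half-edge at each endpoint of $e$. Because $T$ is a tree, both $H$ and $K$ are again cubic trees, and each has strictly fewer vertices than $T$. By the induction hypothesis, $H$ and $K$ are edge-reflexive. Lemma \ref{lem:gluinghalfedges} then immediately gives that $T$ is edge-reflexive, completing the induction.

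The only genuinely non-routine point is supplying a valid base case, since Lemma \ref{lem:gluinghalfedges} only produces edge-reflexivity of a graph with a cutedge from edge-reflexivity of strictly smaller pieces, and hence cannot reach cubic trees with no full edges (i.e. the single cubic vertex) on its own. This is resolved by invoking Fisk's result on cubic paths, or equivalently by the direct check that $L(K_{1,3}\text{-with-half-edges}) = K_3$ is reflexive (its $3$-coloring complex $B(K_3)$ is again $K_3$, and $B^2(K_3)=K_3$). One should also note in passing that a cubic tree, being connected and clearly triangle-free, is edge-colorful (for instance because it is outerplanar and triangle-free, or by Fisk's original argument), so that $\phi_X$ is injective and Lemma \ref{lem:gluinghalfedges} applies; but this is already subsumed in the statement and proof of that lemma. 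No further argument is needed.
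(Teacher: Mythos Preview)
Your argument is correct and is exactly the intended one: the paper states this result as a ``direct corollary'' of Lemma~\ref{lem:gluinghalfedges} without spelling out the induction, and your proof simply makes that induction explicit, with the single cubic vertex (whose line graph is $K_3$) serving as the base case.
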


The lemma also enables us to restrict ourselves to 2-edge-connected cubic graphs.

\begin{corollary}\label{cor:blocks}
Suppose that all graphs obtained from a connected cubic graph $G$ by cutting all cutedges of $G$ are edge-reflexive. Then $G$ is edge-reflexive.
\end{corollary}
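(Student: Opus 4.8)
The plan is to induct on the number $c(G)$ of cutedges of $G$. In the base case $c(G)=0$ the graph $G$ is $2$-edge-connected, so cutting "all cutedges" of $G$ produces just $G$ itself; hence $G$ is edge-reflexive by hypothesis.

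For the inductive step, suppose $c(G)=c\ge 1$, and let $e$ be a cutedge of $G$, with $H$ and $K$ the two cubic graphs obtained from $G$ by cutting $e$ (as in Lemma~\ref{lem:gluinghalfedges}). The first task is to verify the bookkeeping claim that the cutedges of $H$ together with the cutedges of $K$ are exactly the cutedges of $G$ other than $e$. Indeed, $G-e$ has two components $G_1\ni a$ and $G_2\ni b$, and $H$ is obtained from $G_1$ by attaching a half-edge at $a$; since a half-edge is irrelevant to $2$-edge-connectivity, the cutedges of $H$ coincide with the cutedges of $G_1$. Moreover, an edge $f\ne e$ of $G_1$ is a cutedge of $G$ if and only if it is a cutedge of $G_1$: reattaching $G_2$ to $G_1-f$ through the single edge $e$ can only reconnect one of the pieces of $G_1-f$ to $a$'s side, so $G-f$ is disconnected precisely when $G_1-f$ is. The symmetric statements hold for $K$ and $G_2$. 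In particular $c(H)<c$ and $c(K)<c$.

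Next, observe that cutting one cutedge of $G$ leaves every other cutedge a cutedge (of whichever of the two resulting graphs contains it), by the previous paragraph; consequently the final collection of $2$-edge-connected graphs obtained from $G$ by successively cutting all of its cutedges does not depend on the order in which the cuts are made. Performing the cut of $e$ first, this collection is the disjoint union of the collection obtained from $H$ by cutting all of its cutedges and the collection obtained from $K$ by cutting all of its cutedges; by hypothesis every graph in this collection is edge-reflexive. Thus $H$ satisfies the hypothesis of the corollary and has fewer cutedges than $G$, so by the induction hypothesis $H$ is edge-reflexive; likewise $K$ is edge-reflexive. Lemma~\ref{lem:gluinghalfedges} now yields that $G$ is edge-reflexive, completing the induction.

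The only genuinely delicate point is the combinatorial bookkeeping of the second and third paragraphs — that cutting $e$ partitions the remaining cutedges of $G$ between $H$ and $K$, and that "cut all cutedges" is a well-defined, order-independent operation. Once this is in place, the result is an immediate consequence of Lemma~\ref{lem:gluinghalfedges}.
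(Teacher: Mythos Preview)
Your proof is correct and matches the paper's approach: the paper states Corollary~\ref{cor:blocks} as a direct consequence of Lemma~\ref{lem:gluinghalfedges} without giving an explicit argument, and your induction on the number of cutedges is exactly the routine verification the paper leaves to the reader. The bookkeeping you supply (that cutting $e$ neither creates nor destroys other cutedges, so the operation is order-independent) is straightforward and accurate.
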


\subsection{2-Connected outerplanar graphs}

Corollary \ref{cor:blocks} shows that in order to prove Theorem~\ref{thm: Main}, it suffices to prove the following.

\begin{lemma}\label{lem:partmain}
Every 2-connected cubic triangle-free outerplanar graph is edge-reflexive.
\end{lemma}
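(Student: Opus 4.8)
The plan is to follow the three-step strategy announced after Theorem~\ref{thm: Main}: having reduced to the 2-connected case via Corollary~\ref{cor:blocks}, I would first give a structural characterization of 2-connected cubic triangle-free outerplanar graphs, then show edge-reflexivity is preserved by the building operations, with the $4$-cycle as the base case. Concretely, every such graph $G$ has an outerplanar embedding whose weak dual is a tree; I would argue that $G$ can be built up starting from the cubic $4$-cycle $C_4$ (four vertices on a square, each with one half-edge) by two operations: (O1) subdividing two half-edges incident with the same face and joining the two new degree-$2$ vertices by an edge (adding a new quadrilateral face), and (O2) a similar ``local'' operation along a face that inserts a new $4$-face while keeping the graph cubic, triangle-free, and outerplanar. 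The exact statement and verification that these two operations generate all 2-connected cubic triangle-free outerplanar graphs is a finite bookkeeping exercise on the face structure, using that triangle-freeness forces every bounded face to have length $\ge 4$ and that cubic plus outerplanar severely constrains how faces meet.

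The base case is that $L(C_4)$ is reflexive. Since $C_4$ has few $3$-edge-colorings, one can check directly (or invoke Fisk's results on cubic cycles, cited in the introduction) that $B(L(C_4))$ has exactly $4$ triangles and apply Lemma~\ref{lem:edge-reflexive-countC3}. The heart of the proof is the inductive step: if $G'$ is obtained from an edge-reflexive 2-connected cubic triangle-free outerplanar $G$ by (O1) or (O2), then $G'$ is edge-reflexive. I would handle this with a counting argument in the spirit of the proof of Lemma~\ref{lem:gluinghalfedges}: set $X=L(G)$, $X'=L(G')$, establish that $G'$ is edge-colorful (Fiorini-type argument, as noted in the overview), so $\phi_{X'}$ is injective, and then show $X'$ and $B^2(X')$ have the same number of triangles, i.e. that $B(X')$ has exactly $|V(G')|$ $3$-colorings. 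The new edges of $G'$ (the inserted $4$-cycle) contribute a controlled number of new vertices/triangles to $L(G')$, and each $3$-coloring of $B(X')$ restricts to a near-constant or coloring-type pattern on the ``clusters'' of color classes lying over a fixed color class of $X$; Claims analogous to Claims~1--4 of Lemma~\ref{lem:gluinghalfedges}, together with Lemma~\ref{lem:Determined} and Corollary~\ref{cor:necessary 1/2 components}, let me classify all $3$-colorings of $B(X')$ and count them exactly as (\#colorings of $B(X)$) plus a local term equal to the number of new vertices, which by reflexivity of $X$ equals $|V(G')|$.

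The main obstacle I expect is twofold. First, proving the generation statement cleanly — that (O1) and (O2) suffice and that they preserve all three properties (cubic, triangle-free, outerplanar) — requires care because inserting a $4$-face naively can create a triangle or destroy outerplanarity; the right formulation probably restricts the operation to half-edges or edges on the outer face and may need a third degenerate case for small graphs. Second, and more seriously, the inductive reflexivity step must track how $B(X)$ changes as a graph, not just its number of colorings: I need that $B(X)-\phi_X(e)$ stays connected for the relevant edges $e$ and that the local gadget attached at the new $4$-cycle has a unique ``extra'' coloring beyond those inherited from $B(X)$, mirroring the Class~1/2/3 trichotomy. Making the cluster bookkeeping go through for (O2), which is less symmetric than edge-cutting, is where the real work lies; I would try to phrase both operations as a single ``attach a cubic path of length two across a face'' move so that a single counting lemma covers both and composes with Lemma~\ref{lem:gluinghalfedges}.
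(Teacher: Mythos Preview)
Your strategy matches the paper's exactly: build every 2-connected cubic triangle-free outerplanar graph from the cubic $4$-cycle by two local operations, and prove each operation preserves edge-reflexivity via a cluster/counting analysis using Lemma~\ref{lem:Determined} and Corollary~\ref{cor:necessary 1/2 components}. So the architecture is right.

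Where you are vaguer than necessary is in the choice of operations. The paper uses precisely: (1) \emph{adding a $4$-cycle} at an edge $e=v_1v_2$ whose endpoints each carry a half-edge (your (O1), essentially), and (2) \emph{subdividing an edge} $e=v_1v_2$ whose endpoints each carry a half-edge, adding a half-edge at the new vertex. Operation~(2) is much simpler than the ``second $4$-face insertion'' you are groping for, and the generation statement --- that every 2-connected triangle-free cubic outerplanar graph arises from $C_4$ by these two moves --- is then a straightforward induction on the weak dual tree (peel a leaf face; if it is a $4$-gon undo~(1), otherwise undo a subdivision). There is no need for a third degenerate case or for unifying the two moves into one.

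The part you flag as ``where the real work lies'' is correct, and the paper does not shortcut it: each operation gets its own lemma (Lemmas~\ref{lem:op1} and~\ref{lem: subdiv}), each with a cluster partition of $B(X')$ (seven clusters for adding a $4$-cycle, eight for subdivision), several claims establishing which bipartite pieces are connected via Kempe-change isomorphisms, and a final case split on whether a given coloring is constant on a distinguished cluster. The count comes out to exactly two extra colorings for Operation~1 and one extra for Operation~2, matching the number of new vertices. Your plan to mimic the Class~1/2/3 trichotomy of Lemma~\ref{lem:gluinghalfedges} is the right instinct, but be aware the bookkeeping here is more delicate because the clusters overlap subgraphs $\mathcal{T}_i$ that are each isomorphic to $B(Y)$, and one must also verify (inside the argument, not just via Fiorini) that the new graph is edge-colorful.
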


It is well-known (and easy to see) that any 2-connected, triangle-free, cubic, outerplanar graph $G$ can be constructed from a cubic 4-cycle by repeatedly applying the following two operations:
\medskip

1. \emph{Adding a 4-cycle}:
Given an edge \begin{math} e = v_1v_2 \end{math} in a cubic graph $H$, incident with half-edges $e_1$ and $e_2$ (respectively), as well as the full edges $f_1$ and $f_2$, add two new vertices $v_3, v_4$ and form a 4-cycle $v_1v_2v_3v_4$, where $e_1$ joins $v_1$ and $v_4$, $e_2$ joins $v_2$ and $v_3$, and $e_3$ joins $v_3$ and $v_4$.
Finally, add half-edges $e_{v_3}$ and $e_{v_4}$ incident with $v_3$ and $v_4$, respectively.
\medskip

2. \emph{Subdividing an edge}:
Given an edge \begin{math} e = v_1v_2 \end{math} in a cubic graph $H$, where $v_1$ and $v_2$ are incident with half-edges $e_1$ and $e_2$ (respectively), as well as with full edges $f_1$ and $f_2$, subdivide the edge $e$ into two edges $e'$ and $e''$ by inserting a new vertex $v$.
Then add a half-edge $g$ incident with $v$, in order to form a new cubic graph.
\medskip

The two operations with the corresponding notation that will be used when speaking about them are depicted in Figure \ref{fig:operations12}.

\begin{figure}[htb]
     \centering
     {\includegraphics[width=0.8\textwidth]{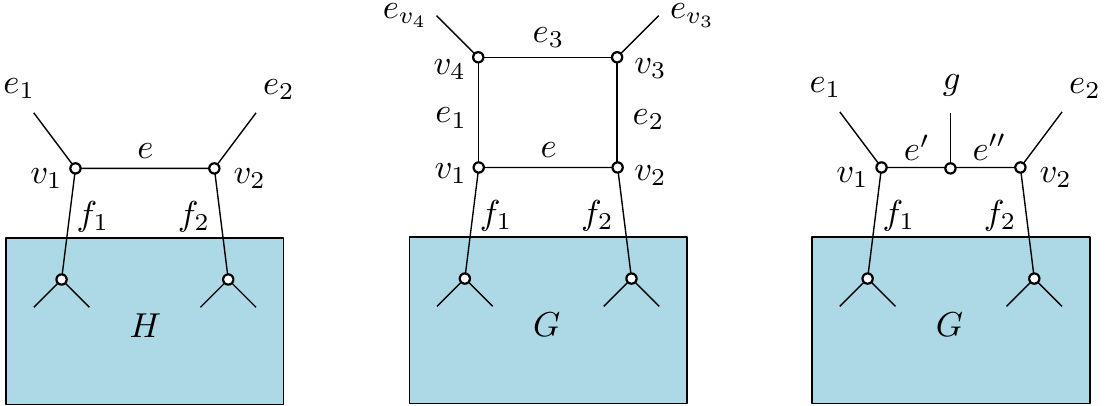}}
     \caption{Adding a 4-cycle and subdividing an edge}
     \label{fig:operations12}
\end{figure}

In order to prove Lemma \ref{lem:partmain}, it suffices to show that these two operations preserve reflexivity.

\begin{lemma}\label{lem:op1}
Operation 1 preserves edge-reflexivity.
\end{lemma}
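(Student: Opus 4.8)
\textbf{Proof plan for Lemma~\ref{lem:op1}.}

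The plan is to mirror, as closely as possible, the structure of the proof of Lemma~\ref{lem:gluinghalfedges}: set up a correspondence between $3$-colorings of $B(X')$ (where $X' = L(G)$ and $G$ is obtained from $H$ by Operation~1) and triangles of $X'$, so that Lemma~\ref{lem:edge-reflexive-countC3} finishes the argument. First I would fix notation: let $X = L(H)$ and $X' = L(G)$. In $H$, the edge $e = v_1v_2$ has incident half-edges $e_1, e_2$ and full edges $f_1, f_2$; write $w$ (a vertex of $X$) for $e$ itself. Operation~1 replaces the two half-edges $e_1,e_2$ and the edge $e$ by the $4$-cycle $v_1v_2v_3v_4$ with edges $e$ (now $v_1v_2$), $e_1 = v_1v_4$, $e_2 = v_2v_3$, $e_3 = v_3v_4$, plus half-edges $e_{v_3}, e_{v_4}$. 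So $E(G) = E(H) \cup \{e_3, e_{v_3}, e_{v_4}\}$ and the two triangles added to $X'$ (relative to $X$) correspond to the new cubic vertices $v_3$ and $v_4$; thus it suffices to show $B(X')$ has exactly two more $3$-colorings than $B(X)$ has triangles, i.e. than $V(H)$. As in the previous proof, note that $H$ is edge-colorful (being a block-restriction of a triangle-free outerplanar graph — or invoke that $G$ triangle-free outerplanar and Operation~1 only removes triangles) so $\phi_{X'}$ is injective and we only need the count.

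Next I would analyze how a $3$-coloring of $L(H)$ extends to $L(G)$. At the vertex $v_1$, a proper edge-coloring of $H$ assigns distinct colors to $e, e_1, f_1$; in $G$, the edges at $v_1$ are $e, e_1, f_1$ with the \emph{same} constraint, but now $e_1$ is a full edge forced to differ from $e_3$ at $v_4$, and similarly at $v_2$ with $e_2, e_3$ at $v_3$. Working through the $4$-cycle $v_1v_2v_3v_4$: given the colors of $e = v_1v_2$, $f_1$ (at $v_1$) and $f_2$ (at $v_2$) coming from $H$, the color of $e_1 = v_1v_4$ is forced (it is the third color at $v_1$), the color of $e_2 = v_2v_3$ is forced, then $e_3 = v_3v_4$ must avoid $e_1$ at $v_4$ and $e_2$ at $v_3$; the half-edges $e_{v_3}, e_{v_4}$ then take the remaining colors. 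One checks this always has exactly one solution unless $e_1$ and $e_2$ receive the same color (in which case $e_3$ has two choices) — and $e_1, e_2$ get the same color precisely when $f_1, f_2$ both differ from that common color, i.e. in a controlled subset of colorings. The upshot is a map from colorings of $H$ to colorings of $G$ which is ``mostly'' a bijection, inducing a map $\Psi: V(B(X')) \to V(B(X))$ by restricting color classes to $V(X)$, and I would record how the clusters $\Psi^{-1}(C)$ look: each such cluster is again independent in $B(X')$, its adjacencies controlled by $B(X)$, and — crucially — the local replacement graph $B(L(C_4))$-with-two-half-edges (the line graph of the cubic $4$-cycle, whose reflexivity is implicit in Fisk's work on cubic cycles, or can be checked directly) governs the internal structure of the fibers exactly as $B(Y)$ did in Lemma~\ref{lem:gluinghalfedges}.

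Then I would run the same three-class bookkeeping as in Lemma~\ref{lem:gluinghalfedges}: establish ``near-constant'' propagation claims (the analogues of Claims~1--4) showing that a $3$-coloring $\chi$ of $B(X')$ is either (Class~1) constant on every fiber $\Psi^{-1}(C)$ — these biject with $3$-colorings of $B(X)$, of which there are $|V(H)|$ since $H$ is edge-reflexive; or (Class~2) ``near-constant'' but not constant, of which connectivity of $B(X) - \phi_X(w)$ (Corollary~\ref{cor:necessary 1/2 components}) together with the propagation claims forces exactly one; or (Class~3) non-constant on a relevant fiber, determined by its restriction to one ``triangle of clusters'' via Lemma~\ref{lem:Determined}, giving (number of triangles in the local cubic-$4$-cycle line graph) minus one extra coloring. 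Tallying: $|V(H)| + 1 + (t_{C_4} - 1)$ where $t_{C_4}$ is the number of cubic vertices of the added $4$-cycle configuration contributing new triangles, namely $2$; this gives $|V(H)| + 2 = |V(G)|$, and Lemma~\ref{lem:edge-reflexive-countC3} then yields edge-reflexivity of $G$.

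I expect the main obstacle to be verifying the propagation claims in the new setting — specifically, checking that $B(X') - \phi_{X'}(e_3)$ (and the bipartite graphs between fibers) are connected, since Corollary~\ref{cor:necessary 1/2 components} is only available for $H$, not yet for $G$; one must instead argue connectivity of these auxiliary bipartite graphs directly from the structure of $B$ of the cubic $4$-cycle and the hypothesis that $B(X)-\phi_X(w)$ is connected. A secondary subtlety is bounding the ``extra'' colorings from the case $e_1, e_2$ equicolored: one has to confirm that the two-fold choice of $e_3$ there is precisely absorbed into the Class~3 count (the ``minus one'' and the single Class~2 coloring), and does not interact with Class~1. Once these connectivity and counting points are pinned down, the rest is a faithful, if lengthy, transcription of the argument in Lemma~\ref{lem:gluinghalfedges}.
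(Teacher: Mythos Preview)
Your plan has a structural gap: the cutedge framework of Lemma~\ref{lem:gluinghalfedges} does not transfer to Operation~1. In that lemma the two pieces $H$ and $K$ share a \emph{single} half-edge, and the key Claim~1 there says that over every triangle of $B(X)$ the fibers assemble into a copy of $B(Y)=B(L(K))$. Operation~1 is not a one-edge gluing: the added $4$-cycle shares three edges $e,e_1,e_2$ (and, if you include the half-edge stubs, also $f_1,f_2$) with $H$, so there is no ``other piece'' $K$ glued along one half-edge. Concretely, a $3$-edge-coloring of $H$ extends uniquely to $G$ when $c(f_1)\neq c(f_2)$ but in two ways when $c(f_1)=c(f_2)$; hence the fiber structure over triangles of $B(L(H))$ is not uniform, and there is no single local graph playing the role of $B(Y)$. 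Your proposed ``$B(L(C_4))$ governs the fibers exactly as $B(Y)$ did'' is therefore false as stated, and your Class~3 count $t_{C_4}-1$ with $t_{C_4}=2$ is ad hoc (if $C_4$ really played the role of $K$, its line graph has four triangles, not two).

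The paper handles this differently. It partitions $V(B(X))$ (with $X=L(G)$, $Y=L(H)$) into seven clusters indexed by which of $e,e_1,e_2,e_3,f_1,f_2$ lie in a color class, and groups the colorings of $X$ into three types $\mathcal{T}_1,\mathcal{T}_2,\mathcal{T}_3$. The crucial observation is that the colorings with $c(e)=c(e_3)$ (namely $\mathcal{T}_2\cup\mathcal{T}_3$) are exactly those extending uniquely from $H$, so $\mathcal{T}_2\cup\mathcal{T}_3\cong B(Y)$; the remaining colorings $\mathcal{T}_1$ are shown isomorphic to $\mathcal{T}_2$ via the Kempe change on $\{e_3,e_{v_3},e_{v_4}\}$, and a second Kempe change on $\{e,e_1,e_2,e_3\}$ gives the needed connectivity inside $\mathcal{T}_2$. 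The case split is then simply whether a coloring $\chi$ of $B(X)$ is constant on $\mathcal{C}_{ee_3}\cap\mathcal{T}_2$: if so there are four such $\chi$ (versus two ``cluster-constant'' colorings of $B(Y)$), and if not $\chi$ is determined by its restriction to $\mathcal{T}_2\cup\mathcal{T}_3\cong B(Y)$ via Lemma~\ref{lem:Determined}. This yields exactly two more colorings of $B(X)$ than of $B(Y)$, matching the two new vertices. To repair your argument you would need to abandon the cutedge analogy and carry out this cluster/Kempe analysis directly.
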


\begin{proof}
Let $G$ be the graph obtained from $H$ by adding a 4-cycle, let $X = L(G)$, $Y = L(H)$, and assume that $Y$ is reflexive.
We have to show that the homomorphism $\phi_{X} : X \rightarrow B^{2}(X)$ is a bijection.
As before, we will establish injectivity and then count the number of colorings of $B(X)$.
\medskip
\begin{2claim}
The graph $X$ is colorful.
\end{2claim}
\medskip
\begin{claimproof}
We are given that $Y$ is colorful.
Since every 3-coloring of $Y$ extends to $X$, it immediately follows that, for any pair of vertices \begin{math} u, v \in V(X) \setminus \{ e_3, e_{v_3}, e_{v_4} \} \end{math}, there exists a coloring of $X$ in which $u$ and $v$ are colored differently.
Moreover, as there is a coloring of $Y$ which colors $f_1$ and $e_2$ differently, there is a coloring of $X$ in which $e_1$ and $e_2$ have the same color.
Now, by performing an $\{ e_3, e_{v_3}, e_{v_4} \}$ Kempe change, if needed, we can ensure that for any \begin{math} u \in V(X) \setminus \{ e_3, e_{v_3}, e_{v_4} \} \end{math} and \begin{math} v \in \{ e_3, e_{v_3}, e_{v_4} \} \end{math}, there exists a coloring in which $u$ and $v$ are colored differently.
Finally, as there exists a coloring in $Y$ which colors $e_1$ and $e_2$ differently, we can extend this coloring to $X$ in order to obtain a coloring in which $e_{v_3}$ and $e_{v_4}$ are colored differently.
This completes the claim.
\end{claimproof}
\medskip

Now, we partition the vertices of $B(X)$ into the seven sets shown in Figure~\ref{fig:part4}, where we denote by $\mathcal{C}_{x_1x_2 \dots x_t}$ the set of color classes of $X$ which contain the vertices \begin{math} x_1, x_2, \dots, x_t \in \{ e,e_1,e_3,e_2, f_1, f_2 \} \end{math}. In other words,
$$
   \mathcal{C}_{x_1x_2 \dots x_t} = \bigcap_{i=1}^t \phi_X(x_i).
$$
Such sets will be referred to as \emph{clusters}. In Figure~\ref{fig:part4} we also have the cluster $\mathcal{C}_{\widehat{e_3}f_1f_2}$, where $\widehat{e_3}$ indicates that the color classes in this cluster \emph{do not} contain $e_3$. Note that $\mathcal{C}_{e_3f_1f_2} \cup \mathcal{C}_{\widehat{e_3}f_1f_2}$ is a partition of $\mathcal{C}_{f_1f_2}$.

The 3-colorings of $X$ fall into three types as indicated by triangles in Figure~\ref{fig:part4}. The subgraph of $B(X)$ consisting of all triangles of type $i\in \{1,2,3\}$ will be denoted by $\mathcal{T}_i$. Note that $\mathcal{T}_i$ contains only those vertices from the corresponding three clusters that appear as color classes in $\mathcal{T}_i$. Thus, $\mathcal{T}_i$ is obtained from the induced subgraph on the three clusters by removing the isolated vertices (which must participate in colorings of the neighboring $\mathcal{T}_j$, but not in $\mathcal{T}_i$).
We also denote the subgraph of $\mathcal{T}_i$ induced by the union of the two adjacent clusters $C_{x_1 \dots x_t}$ and $C_{w_1 \dots w_s}$ by $C_{x_1\dots x_t} C_{w_1 \dots w_s}$.

\medskip

\begin{2claim2}
$\mathcal{T}_2 \cup \mathcal{T}_3$ is isomorphic to $B(Y)$. Under this isomorphism, $\mathcal{C}_{ee_3}$ is mapped onto $\phi_Y(e)$.
\end{2claim2}
\medskip
\begin{claimproof}
Consider the map \begin{math} \psi: \mathcal{T}_2 \cup \mathcal{T}_3 \rightarrow B(Y) \end{math} which takes a color class $C$ of $X$ to its intersection with $V(Y)$.
As every coloring $c$ of $Y$ extends uniquely to a coloring $c'$ of $X$ with $c'(e) = c'(e_3)$, $\psi$ is a bijection.
Moreover, this unique extension also establishes that $\psi$ is a graph homomorphism, since it follows that $ABC$ is a coloring of $\mathcal{T}_2 \cup \mathcal{T}_3$ if and only if $\psi(A) \psi(B) \psi(C)$ is a coloring of $Y$.
Thus, our claim is established.
\end{claimproof}
\medskip

\begin{figure}[htb]
     \centering
     {\includegraphics[width=0.72\textwidth]{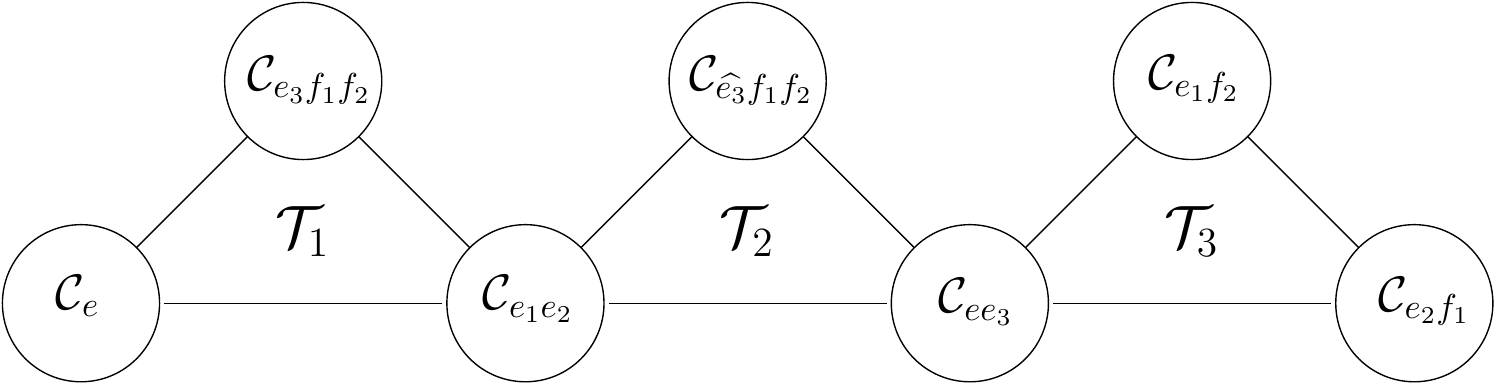}}
     \caption{Partitioning $B(X)$ into clusters}
     \label{fig:part4}
\end{figure}

Since $Y$ is reflexive, Corollary \ref{cor:necessary 1/2 components} shows that $B(Y)-\phi_Y(e)$ has precisely two components. Hence, it follows from Claim 2 that $\mathcal{C}_{e_1 f_2 } \mathcal{C}_{ e_2 f_1 }$ and \begin{math} \mathcal{C}_{ e_1 e_2 } \mathcal{C}_{ \widehat{e_3} f_1 f_2 } \end{math} are bipartite and connected subgraphs of $ \mathcal{T}_2 \cup \mathcal{T}_3$.
\medskip

\begin{2claim3}
$\mathcal{T}_1$ is isomorphic to $\mathcal{T}_2$ and $\mathcal{C}_e \mathcal{C}_{ e_3 f_1 f_2 }$ is isomorphic to $\mathcal{C}_{e e_3} \mathcal{C}_{\widehat{e_3}f_1 f_2}$.
\end{2claim3}
\medskip
\begin{claimproof}
Note that the colorings of $X$ forming $\mathcal{T}_1$ and $\mathcal{T}_2$ have $e_1$ and $e_2$ colored the same.
Consider the map \begin{math} \psi: \mathcal{T}_2 \rightarrow \mathcal{T}_1 \end{math} which takes a color class $C\in V(\mathcal{T}_2)$ to the color class of $X$ in $\mathcal{T}_1$ which results from performing a Kempe change on $\{ e_3, e_{v_4}, e_{v_3} \}$.
Observe that this map is well-defined since $e_{v_3}$ and $e_{v_4}$ are colored the same and hence the colors of $e_1$ and $e_2$ remain the same.
As $\psi$ and $\psi^{-1}$ are both invertible (as Kempe changes are reversible), $\psi$ is a bijection.
Similarly, $\psi$ is a graph homomorphism, since it follows from the existence and reversablility of the Kempe change discussed that $ABC$ is a triangle of $\mathcal{T}_2$ if and only if $\psi(A) \psi(B) \psi(C)$ is a triangle of $\mathcal{T}_1$.
These observations confirm our claim.
\end{claimproof}
\medskip

\begin{2claim4}
$\mathcal{C}_{ \widehat{e_3}f_1 f_2 } \mathcal{C}_{e e_3 }$ is isomorphic to $\mathcal{C}_{ \widehat{e_3}f_1 f_2 } \mathcal{C}_{ e_1 e_2 }$.
\end{2claim4}
\medskip
\begin{claimproof}
Consider the map \begin{math} \psi: \mathcal{T}_2 \rightarrow \mathcal{T}_2 \end{math} which takes a color class $C\in V(\mathcal{T}_2)$ to the color class of $X$ in $\mathcal{T}_2$ which results from performing a Kempe change on $\{ e, e_1, e_2, e_3 \}$.
Observe that this map is well-defined, as such a Kempe change always exists and leaves $e_1$ and $e_2$ the same color, which remains different from the shared color of $e$ and $e_3$.
As Kempe changes are reversible, $\psi$ is a bijection.
Similarly, $\psi$ is a graph homomorphism, since it follows from the existence and reversibility of the Kempe change discussed that $ABC$ is a coloring of $\mathcal{T}_2$ if and only if $\psi(A) \psi(B) \psi(C)$ is a coloring of $\mathcal{T}_2$.
\end{claimproof}
\medskip

Claim 4 implies that $\mathcal{C}_{ e e_3} \mathcal{C}_{\widehat{e_3}f_1 f_2}$ is bipartite and connected. By Claim 3 it follows immediately that $\mathcal{C}_{e} \mathcal{C}_{e_3 f_1 f_2}$ is bipartite and connected.

Now, let $\chi$ be a coloring of $B(X)$. Suppose first that $\chi$ is constant on $\mathcal{C}_{ e e_3 } \cap \mathcal{T}_2$.
We claim that $\chi$ is constant on the whole cluster $\mathcal{C}_{ e e_3 }$.
For a contradiction, suppose that this claim is false.
As $\chi$ is constant on  $\mathcal{C}_{ e e_3 } \cap \mathcal{T}_2$, we know that $\chi$ is constant on all three clusters of $\mathcal{T}_2$.
Now, as $Y$ is reflexive and $\mathcal{T}_2 \cup \mathcal{T}_3$ is isomorphic to $B(Y)$ (by Claim 2), the color class of $\mathcal{T}_2 \cup \mathcal{T}_3 \cong B(Y)$ containing $\mathcal{C}_{ e e_3 } \cap \mathcal{T}_2$ must be of the form $\phi_{Y}(v)$, for some vertex $v \in V(Y)$.
If $v = e$ or $v = e_3$, we are done, so we may assume otherwise.
As $\mathcal{C}_{\widehat{e_3}f_1 f_2}$ and $\mathcal{C}_{e_1 e_2} \cap \mathcal{T}_2$ are both nonempty, the color class containing $\mathcal{C}_{ e e_3 } \cap \mathcal{T}_2$ in $B(Y)$ cannot be any of $\phi_{Y}(f_1)$, $\phi_{Y}(f_2)$, $\phi_{Y}(e_1)$, $\phi_{Y}(e_2)$, $\phi_{Y}(e_{v_4})$ or $\phi_{Y}(e_{v_3})$.
However, for any vertex $v \in V(Y) \setminus \{ e, e_1, e_3, e_2, f_1, f_2, e_{v_4}, e_{v_3} \}$,  by Claim 4, if $\phi_{Y}(v) \cap \mathcal{C}_{e e_3} \cap \mathcal{T}_2 \neq \emptyset$, then $\phi_{Y}(v) \cap \mathcal{C}_{e_1 e_2} \cap \mathcal{T}_2 \neq \emptyset$.
But this contradicts the fact that $\chi$ is constant on $\mathcal{C}_{e e_3} \cap \mathcal{T}_2$, completing our claim.

Consequently, if $\chi$ is constant on $\mathcal{C}_{ e e_3 } \cap \mathcal{T}_2$, then $\chi$ is constant on the whole cluster $\mathcal{C}_{ e e_3 }$, and hence, on all the clusters of $B(X)$.
There are four such colorings of $B(X)$, compared to two such colorings of $B(Y)$.

Let us now consider the case when $\chi$ is not constant on $\mathcal{C}_{ e e_3 } \cap \mathcal{T}_2$.
In this case, by Lemma~\ref{lem:Determined} and Claim 4, $\chi$ is determined and non-constant on $\mathcal{C}_{ e_1 e_2 } \cap \mathcal{T}_2$ and determined on $\mathcal{C}_{ \widehat{e_3}f_1 f_2 }$.
By Claim 3 and Lemma \ref{lem:Determined}, it then follows that $ \chi $ is completely determined on $\mathcal{T}_1$.
Thus, the coloring $\chi$ of $B(X)$ is completely determined by its restriction to $\mathcal{T}_2 \cup \mathcal{T}_3$, which is isomorphic to $B(Y)$.
Each such coloring is determined uniquely by a coloring of $B(Y)$ which is not constant on all the clusters of $B(X)$.
Since two colorings of $B(Y)$ correspond to case 1, we get two fewer colorings in case 2, so we obtain two less colorings than the number of colorings of $B(Y)$.
\medskip

Thus, in total, $B(X)$ has two more colorings than $B(Y)$ had, which is precisely the number of additional triangles in $X$.  Consequently, $X \cong B^{2}(X)$, as required.
\end{proof}

\begin{lemma}\label{lem: subdiv}
Operation 2 preserves edge-reflexivity.
\end{lemma}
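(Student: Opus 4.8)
The plan is to follow the template of Lemma~\ref{lem:op1}. Write $G$ for the graph obtained from $H$ by subdividing $e=v_1v_2$ with a new vertex $v$ and attaching a half-edge $g$ at $v$, and put $X=L(G)$, $Y=L(H)$, assuming $Y$ is reflexive (and hence colorful). In passing from $Y$ to $X$, the vertex $e$ of the line graph is replaced by the triangle $\{e',e'',g\}$, where $e'$ inherits the two neighbours $e_1,f_1$ of $e$ coming from $v_1$ and $e''$ inherits the two neighbours $e_2,f_2$ coming from $v_2$; every other triangle of $Y$ survives unchanged, and the only new triangle of $X$ is $\{e',e'',g\}$. Thus $X$ has exactly one more triangle than $Y$, and since $|V(G)|=|V(H)|+1$, Lemma~\ref{lem:edge-reflexive-countC3} reduces the task to showing that (a) $X$ is colorful, and (b) $B(X)$ has exactly one more $3$-coloring than $B(Y)$.

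For (a) I would argue with Kempe changes from the colorfulness of $Y$, as in Claim~1 of Lemma~\ref{lem:op1}; this is already the first place at which Operation~2 genuinely differs from Operation~1, because \emph{no} $3$-coloring of $H$ extends to $G$ --- the triangle at $v$ forces $e'$ and $e''$ to receive distinct colors, whereas they replace the single edge $e$. (In the only setting needed for Theorem~\ref{thm: Main}, where $G$ is a connected cubic triangle-free outerplanar graph, colorfulness is immediate from the strengthening of Fiorini's theorem quoted earlier.)

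For (b) I would first set up the cluster decomposition. Since $\{e',e'',g\}$ is a triangle of $X$, every color class of $X$ contains exactly one of $e',e'',g$; refining this by which of $e_1,f_1$ and which of $e_2,f_2$ the class meets --- it meets exactly one from each pair unless it contains $e'$, respectively $e''$ --- one partitions $V(B(X))$ into a handful of clusters $\mathcal{C}_{\dots}$, with a small number of triangle-types of $3$-colorings of $X$ according to how the gadget is colored, exactly as in the figure accompanying Lemma~\ref{lem:op1}. The key claim, and the technical heart of the proof, is that the union of the triangle-types in which $g$ is colored canonically is isomorphic to $B(Y)$, with $\mathcal{C}_g$ (or the appropriate sub-cluster of it) carried onto $\phi_Y(e)$. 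Because colorings of $H$ do not extend to $G$, this isomorphism cannot simply be ``restrict a color class of $X$ to $V(Y)$''; instead the correspondence is mediated by colorings of the graph $Y-e$ (equivalently, of $X$ with the triangle $\{e',e'',g\}$ removed): such a coloring extends uniquely to $Y$ exactly when the colors missing at $v_1$ and at $v_2$ agree, and extends uniquely to $X$ exactly when they differ. I would therefore realize the isomorphism by composing restriction to $Y-e$ with a Kempe change on the $\{e',e'',g\}$-chain (and the corresponding $\{e,e_1,e_2\}$-type chain in $Y$) that forces the two missing colors to coincide; checking that this Kempe change is always available, reversible, and compatible with adjacency in $B(\cdot)$ is the crux.

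Granting this isomorphism, the rest proceeds as in Lemma~\ref{lem:op1}: additional isomorphisms among the remaining clusters and triangle-types are produced by Kempe changes on $\{e',e'',g\}$; Corollary~\ref{cor:necessary 1/2 components}, applied to the full edge $e$ of $H$, shows that $B(Y)-\phi_Y(e)$ has two components, so the relevant bipartite graphs between adjacent clusters are bipartite and connected; and Lemma~\ref{lem:Determined} propagates a coloring of $B(X)$ from cluster to cluster. One then classifies the $3$-colorings $\chi$ of $B(X)$ by whether $\chi$ is constant on every cluster: the constant ones are counted directly from the (small) cluster graph, and a non-constant $\chi$ is determined by its restriction to the copy of $B(Y)$ inside $B(X)$, giving a bijection onto those $3$-colorings of $B(Y)$ that are not constant on clusters. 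The bookkeeping should balance to yield exactly one more $3$-coloring of $B(X)$ than of $B(Y)$, matching the single new vertex $v$, whence $\phi_X\colon X\to B^2(X)$ is an isomorphism by Lemma~\ref{lem:edge-reflexive-countC3}. The main obstacle throughout is the one flagged above: building a Kempe-change bijection that identifies a large part of $B(X)$ with $B(Y)$ despite no coloring of $H$ literally extending to $G$; once that, and the connectivity of the inter-cluster bipartite graphs, are in hand, the counting is the same sort of bookkeeping as in Lemma~\ref{lem:op1}.
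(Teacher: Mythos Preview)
Your overall scaffolding is correct --- reduce via Lemma~\ref{lem:edge-reflexive-countC3} to colorfulness plus a $+1$ count of $3$-colorings of $B(X)$ over $B(Y)$ --- and you have correctly put your finger on the genuine obstacle: no $3$-coloring of $H$ extends to $G$, so one cannot simply restrict color classes to $V(Y)$ to get a copy of $B(Y)$ inside $B(X)$.

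However, your proposed fix for this obstacle does not work as written. You suggest mediating through colorings of $Y-e$, noting that such a coloring extends to $Y$ iff the colors missing at $v_1$ and $v_2$ agree, and to $X$ iff they differ; you then propose a Kempe change on ``the $\{e',e'',g\}$-chain'' to pass between the two parities. But a Kempe change on $\{e',e'',g\}$ permutes colors only among those three edges and leaves the colors on $e_1,f_1,e_2,f_2$ untouched, so it cannot change whether the missing colors at $v_1$ and $v_2$ agree. The alternative ``$\{e,e_1,e_2\}$-type chain in $Y$'' is a Kempe chain only when $e_1$ and $e_2$ already share a color, which is exactly the case you do \emph{not} need to flip. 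So the bijection you describe is not defined in general, and the ``crux'' you flag is in fact a gap.

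The paper resolves this differently and quite elegantly: it introduces two relabeled copies $H_{e'}$ and $H_{e''}$ of $H$ whose edge sets are literally subsets of $E(G)$ (roughly, $H_{e'}$ forgets $e'$ and $g$ and lets $e''$ play the role of $e$, and dually for $H_{e''}$). The $3$-edge-colorings of $G$ with $c(e')=c(e_2)$ are then in bijection, by plain restriction, with the $3$-edge-colorings of $H_{e'}\cong H$; likewise those with $c(e'')=c(e_1)$ correspond to colorings of $H_{e''}\cong H$. This gives, with no Kempe gymnastics, isomorphisms $\mathcal{T}_1\cup\mathcal{T}_2\cong B(X_{e'})\cong B(Y)$ and $\mathcal{T}_2\cup\mathcal{T}_3\cong B(X_{e''})\cong B(Y)$, where $\mathcal{T}_1,\dots,\mathcal{T}_4$ are the four triangle-types in an $8$-cluster partition of $B(X)$ (indexed by pairs from $\{e',e'',e_1,e_2,f_1,f_2\}$, not merely by which of $e',e'',g$ is present). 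From there the argument proceeds as you outline: Corollary~\ref{cor:necessary 1/2 components} and Kempe-change automorphisms establish connectivity of the inter-cluster bipartite graphs, Lemma~\ref{lem:Determined} propagates non-constant colorings, and the bookkeeping shows there are three cluster-constant colorings of $B(X)$ versus two of $B(Y)$, with the non-constant colorings in bijection, yielding exactly one extra coloring. The missing idea in your proposal is precisely this relabeling trick that produces a copy of $B(Y)$ inside $B(X)$ by restriction rather than by an (unavailable) Kempe change.
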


\begin{proof}
Again, let $X=L(G)$, $Y=L(H)$, and we assume that $Y$ is reflexive.
As in our previous arguments, we will demonstrate that the homomorphism $\phi_{X} : X \rightarrow B^{2}(X)$ is a bijection by establishing injectivity and then counting the number of colorings of $B(X)$.
We also find it useful to define graphs $H_{e'}$ and $H_{e''}$ as shown in Figure~\ref{fig: restrict}, as well as their line graphs $X_{e'}$ and $X_{e''}$, respectively. Note that $H_{e'}$ and $H_{e''}$ are both isomorphic to $H$, but have some of their edges labelled differently. Through this labeling we can view $E(H_{e'})$ and $E(H_{e''})$ as subsets of $E(G)$.

Figure \ref{fig: restrict} also shows the correspondence of 3-edge-colorings of $H_{e'}$ and $H_{e''}$ with certain 3-edge-colorings of $G$.
Observe that $H_{e'}$ ($H_{e''}$) has precisely the 3-edge colorings of $G$ in which the color of $e'$ is equal to the color of $e_2$ (the color of $e''$ is equal to the color of $e_1$) and $H_{e'} \cong H \cong H_{e''}H$.

\begin{figure}[htb]
	\centering
	\includegraphics[width=\textwidth]{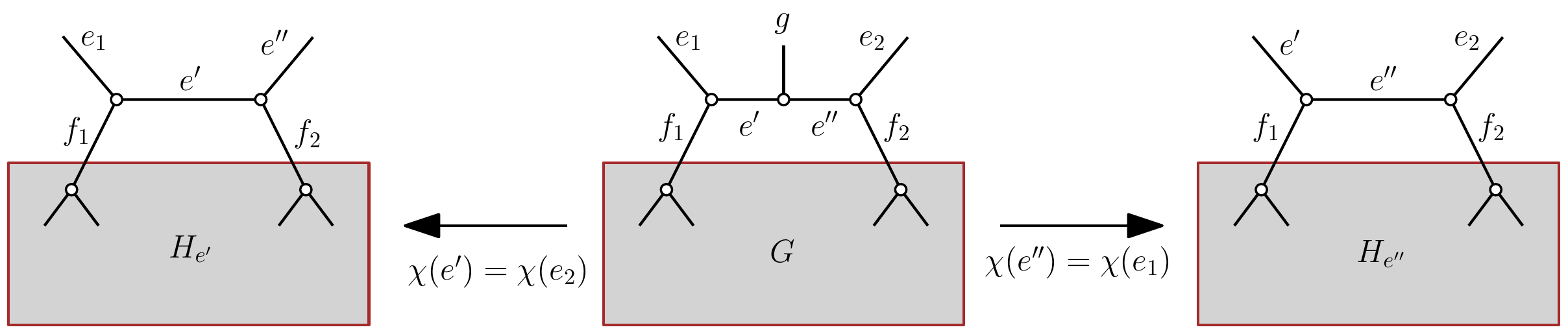} \caption{Two graphs isomorphic to $H$ whose 3-edge-colorings correspond to certain subsets of the 3-edge-colorings of $G$}
\label{fig: restrict}
\end{figure}

\medskip

\begin{2claim}
$G$ is edge-colorful.
\end{2claim}
\medskip
\begin{claimproof}
Let $u, v \in V(X)$.  We examine four cases.
\medskip

Case 1: If $u,v \notin \{e', e'', e_1, e_2, g \}$, then a 3-coloring $c_{e'}$ of $X_{e'}$ with $c_{e'}(u) \neq c_{e'}(v)$ can be extended to a coloring of $X$ in which $c(u) \neq c(v)$.
As $X_{e'}$ is colorful, this establishes case 1.
\medskip

Case 2: $ u,v \in \{  e',  e'', e_1, e_2, g \}$:
As $X_{e'}$ is colorful, there exists a 3-edge coloring of $G$ with $c(f_1) \neq c(f_2)$ and $c(e') = c(e_2)$. Since $c(e')$ is different from $c(f_1)$ and $c(f_2)$, we have that $c(e'') \neq c(e_1)$, $c(e_1) \neq c(e_2)$ and $c(e_2) \neq c(g)$.
By the same argument on $X_{e''}$, we also establish that there is a coloring distinguishing $e', e_2$ and $e_1, g$.
\medskip

Case 3: If $ u \in \{  e',  e'', e_1, e_2 \}$ and $v \notin \{  e',  e'', e_1, e_2, g \}$, then we may always arrange a coloring of $X$ in which $c(f_1) = c(f_2)$, as $X_{e'}$ is colorful.
In the event that $c(u) = c(v)$, we can then perform a Kempe change on the vertices of the set $\{  e',  e'', e_1, e_2 \}$ in order to arrange for $c(u) \neq c(v)$.
This establishes case 3.
\medskip

Case 4: $u = g$ and $v \notin \{  e',  e'', e_1, e_2, g \}$:
In case 2, we showed that there is a 3-edge coloring of $G$ with $c(e') = c(f_2)$.
So, for each vertex $v \notin \{g, e_2\}$ we can arrange for $c(g) \neq c(v)$ by performing a Kempe change on the path $g, e'', e_2$ (if needed).
This completes the claim.
\end{claimproof}
\medskip

Now, as $X$ is colorful (and thus $\phi_{X}: X \rightarrow B^{2}(X)$ is injective), it suffices to establish that $B(X)$ has precisely one more 3-coloring than $B(Y)$.  In order to prove this fact, we will again partition $B(X)$ into \emph{clusters} of the form $\mathcal{C}_{ab} = \phi_X(a)\cap \phi_X(b)$, where $a,b\in V(X)$. We will consider the partition into clusters as depicted in Figure \ref{fig: partsubdiv}.

\begin{figure}[htb]
	\centering
	\includegraphics[width=0.42\textwidth]{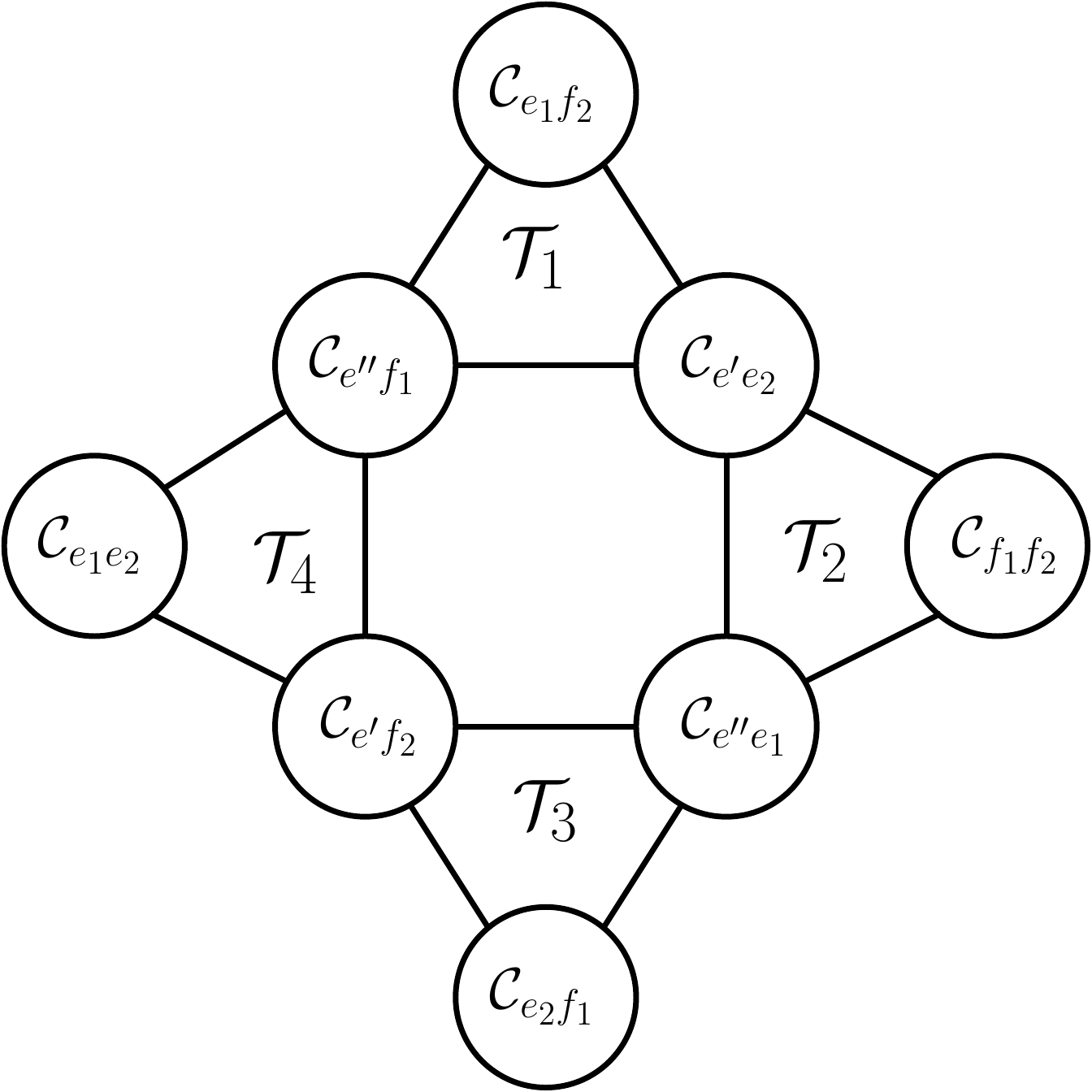}
    \caption{Partitioning $B(X)$ into clusters}
\label{fig: partsubdiv}
\end{figure}

As in the proof of Lemma \ref{lem:op1}, let $\mathcal{T}_{i}$ $(1 \leq i \leq 4)$ be the subgraph of $B(X)$ on all color sets participating in 3-colorings of $X$ whose edges are between the three clusters of $\mathcal{T}_{i}$, as shown in Figure~\ref{fig: partsubdiv}.  We will refer to the subgraph consisting of all edges in $B(X)$ joining the clusters $C_{uv}$ and $\mathcal{C}_{xy}$ as the edge $\mathcal{C}_{uv}\mathcal{C}_{xy}$ of our cluster partition. When $\mathcal{C}_{uv}\mathcal{C}_{xy}$ is nonempty, it is contained in precisely one of the subgraphs $\mathcal{T}_i$, $i\in \{1,2,3,4\}$.
\medskip

\begin{2claim2}
The following subgraphs of $B(X)$ are isomorphic: $\mathcal{T}_1 \cup \mathcal{T}_2 \cong B(X_{e'}) \cong B(Y) \cong B(X_{e''}) \cong \mathcal{T}_2 \cup \mathcal{T}_3$.
\end{2claim2}
\medskip

\begin{claimproof}
That $ B(X_{e'}) \cong B(Y) \cong B(X_{e''}) $ follows immediately from their definitions.
So, it suffices to prove that the maps $\psi_1 :  \mathcal{T}_1 \cup \mathcal{T}_2 \rightarrow B(X_{e'})$ and $\psi_2 :  \mathcal{T}_2 \cup \mathcal{T}_3 \rightarrow B(X_{e''})$, each of which takes a color class $C$ of $X$ to its restriction as indicated in Figure~\ref{fig: restrict}, are isomorphisms.
As these two arguments are identical, we will only establish the claim for the map $\psi_1$.

As every coloring $c$ of $X_{e'}$ extends uniquely to a coloring of $X$ and $\mathcal{T}_1\cup \mathcal{T}_2$ includes the whole $\mathcal{C}_{e'e_2}$, $\psi_1$ is a bijection.
Moreover, this unique extension also establishes that $\psi_1$ is a graph homomorphism, since it follows that a coloring $ABC$ of $X$ is in $\mathcal{T}_1 \cup \mathcal{T}_2$ if and only if $\psi_1 (A) \psi_1 (B) \psi_1 (C)$ is a coloring of $X_{e'}$.
Thus, Claim 2 is resolved.
\end{claimproof}
\medskip

\begin{2claim3}
The edges of the cluster partition in Figure~\ref{fig: partsubdiv} representing $\mathcal{C}_{e_1 f_2} \mathcal{C}_{e'' f_1}$, $\mathcal{C}_{f_1 f_2} \mathcal{C}_{e'' e_1}$, $\mathcal{C}_{e' e_2} \mathcal{C}_{f_1 f_2}$ and $\mathcal{C}_{e' f_2} \mathcal{C}_{e_2 f_1}$ all represent connected, bipartite subgraphs of $B(X)$.
Moreover, $\mathcal{C}_{f_1 f_2} \mathcal{C}_{e'' e_1} \cong \mathcal{C}_{e' e_2} \mathcal{C}_{f_1 f_2}$.
\end{2claim3}
\medskip
\begin{claimproof}
Recall that, by Claim 2, $\mathcal{T}_1 \cup \mathcal{T}_2 \cong B(X_{e'}) \cong B(Y) \cong B(X_{e''}) \cong \mathcal{T}_2 \cup \mathcal{T}_3$.
Since $X_{e'}$ is reflexive and $e'$ is in two triangles of $X_{e'}$, $\phi_{X_{e'}}(e')$ must be in precisely two triangles of $B^{2}(X_{e'})$. Through the isomorphism $\mathcal{T}_1 \cup \mathcal{T}_2 \cong B(X_{e'})$, $\phi_{X_{e'}}(e)$ corresponds to the cluster $\mathcal{C}_{e'e_2}$ in $\mathcal{T}_1 \cup \mathcal{T}_2$.
Therefore, $\mathcal{C}_{e_1 f_2} \mathcal{C}_{e'' f_1}$ and $\mathcal{C}_{f_1 f_2} \mathcal{C}_{e'' e_1}$ are bipartite and connected.
Similarly, as $\phi_{X_{e''}}(e'')$ must be in precisely two triangles of $B^{2}(X_{e''})$, $\mathcal{C}_{e' e_2} \mathcal{C}_{f_1 f_2}$ and $\mathcal{C}_{e' f_2} \mathcal{C}_{e_2 f_1}$ are bipartite and connected.

Now, consider the map \begin{math} \psi: \mathcal{T}_2 \rightarrow \mathcal{T}_2 \end{math}, which takes a color class $C\in V(\mathcal{T}_2)$ to the color class $C'$ which results from performing a Kempe change on $\{ e_1, e', e'', e_2 \}$.
Observe that this map is well-defined, as such a Kempe change always exists and leaves $e_1$ and $e''$ the same color, which remains different from the shared color of $e'$ and $e_2$.
Moreover, as Kempe changes are reversible, $\psi$ is a bijection, and $ABC$ is a triangle in $\mathcal{T}_2$ if and only if $\psi(A) \psi(B) \psi(C)$ is a triangle in $\mathcal{T}_2$.
Consequently, $\psi$ is a graph isomorphism.
It remains to note that $\psi$ maps $\mathcal{C}_{f_1 f_2} \mathcal{C}_{e'' e_1}$ onto $\mathcal{C}_{e' e_2} \mathcal{C}_{f_1 f_2}$. This establishes Claim 3.
\end{claimproof}
\medskip

We will now discuss the structure of 3-colorings of $B(X)$. If we consider our cluster partition as an 8-vertex graph, each 3-coloring of that graph determines a 3-coloring of $B(X)$ in which each cluster is monochromatic (the coloring is \emph{constant on the cluster}). There are other 3-colorings of $B(X)$. To understand them, we first show that each such coloring is determined by its restriction to certain subgraphs of $B(X)$.

\medskip

\begin{2claim4}
For every 3-coloring of $B(X)$, its restriction to $\mathcal{T}_1 \cup \mathcal{T}_3$ determines the coloring on $\mathcal{T}_4$.
\end{2claim4}
\medskip
\begin{claimproof}
Observe that deleting either $e'$ and $f_2$ or $e''$ and $f_1$ separates $X$ into two components, one of which only contains vertices in $\{e_1, e_2, e', e'', g \}$.
Consequently, we can obtain from any coloring of $X$ represented by a triangle in $\mathcal{T}_4$ a triangle in $\mathcal{T}_1$ through a Kempe change on $\{e_1,e',g\}$ and a triangle in $\mathcal{T}_3$ through a Kempe change on $\{e_2,e'',g\}$. This shows that $\mathcal{C}_{e''f_1} = \mathcal{T}_1 \cap \mathcal{T}_3$ and $\mathcal{C}_{e'f_2} = \mathcal{T}_3 \cap \mathcal{T}_4$.
Thus, the coloring of $\mathcal{C}_{e'' f_1}$ is determined by the coloring of $\mathcal{C}_{e'' f_1} \cap \mathcal{T}_1$, and the coloring of $\mathcal{C}_{e^{\prime } f_2}$ is completely determined by the coloring of $\mathcal{C}_{e' f_2} \cap \mathcal{T}_3$.
Now, every vertex in $\mathcal{C}_{e_1e_2}$ is in a triangle in $\mathcal{T}_4$ and hence also its color is determined.
\end{claimproof}
\medskip

\begin{2claim5}
If a coloring $\chi$ of $X$ is constant on $\mathcal{C}_{e'' e_1} \cap \mathcal{T}_2$, then $\chi$ is constant on the whole cluster $\mathcal{C}_{e'' e_1}$.  If $\chi$ is constant on $\mathcal{C}_{e' e_2} \cap \mathcal{T}_2$, then it is constant on $\mathcal{C}_{e' e_2}$.
\end{2claim5}
\medskip

\begin{claimproof}
For a contradiction, suppose that this claim is false.
If $\chi$ is constant on  $\mathcal{C}_{e'' e_1} \cap \mathcal{T}_2$, then (by Claim 3) it is constant on each of $\mathcal{C}_{e' e_2} \cap \mathcal{T}_2$ and $\mathcal{C}_{f_1 f_2}$.
Now, as $\mathcal{T}_2 \cup \mathcal{T}_3 \cong B(X_{e''})$ is reflexive, the color class containing $\mathcal{C}_{e'' e_1} \cap \mathcal{T}_2$ in $B(X_{e''})$ must be of the form $\phi_{X_{e''}}(v)$, for some vertex $v \in V(X_{e''})$.
If $v = e''$, we are done, so we may assume otherwise.
As $\mathcal{C}_{f_1 f_2}$ and $\mathcal{C}_{e' e_2} \cap \mathcal{T}_2$ are both nonempty, the color class containing $\mathcal{C}_{ e'' e_1 } \cap \mathcal{T}_2$ in $B(X_{e''})$ cannot be any of $\phi_{X_{e''}}(f_1)$, $\phi_{X_{e''}}(f_2)$, $\phi_{X_{e''}}(e'')$,  $\phi_{X_{e''}}(e_2)$ or $\phi_{X_{e''}}(e')$.
However, when the vertex $v \in V(X_{e''}) \setminus \{e_2, e', e'', f_1, f_2\}$, then $\phi_{X_{e''}}(v) \cap \mathcal{C}_{{e' e_2}} \cap \mathcal{T}_2 \neq \emptyset$ by Claim 3.
But this contradicts the fact that $\chi$ is constant on $\mathcal{C}_{e'' e_1} \cap \mathcal{T}_2$, completing our claim.
The proof of the second statement is the same.
\end{claimproof}
\medskip

\begin{2claim6}
The subgraphs of $B(X)$ corresponding to $\mathcal{T}_4$ and $\mathcal{T}_3$ are isomorphic, and $\mathcal{C}_{e'' f_1} \mathcal{C}_{e' f_2}$ is bipartite and connected.
\end{2claim6}
\medskip

\begin{claimproof}
As in the proof of Claim 3, we consider the map $\psi: \mathcal{T}_4 \rightarrow \mathcal{T}_3$ induced on $B(X)$ by performing a Kempe change in $X$ on $ \{ g, e'', e_2 \}$.
This map is a bijection between $\mathcal{T}_4$ and $\mathcal{T}_3$.
Moreover, $ABC$ is a triangle in $\mathcal{T}_4$ if and only if $\psi(A) \psi(B) \psi(C)$ is a triangle in $\mathcal{T}_3$, so this is indeed a graph isomorphism.
\end{claimproof}
\medskip

\begin{2claim7}
If a 3-coloring $\chi$ of $B(X)$ is non-constant on $\mathcal{C}_{e'' e_1} \cap \mathcal{T}_2$, then the restriction of $\chi$ on $(\mathcal{C}_{e'' e_1} \cap \mathcal{T}_2) \cup (\mathcal{C}_{e' f_2}\cap \mathcal{T}_3)$ determines $\chi$ on the whole of $\mathcal{T}_3$.
\end{2claim7}
\medskip

\begin{claimproof}
Suppose that $A \in (\mathcal{C}_{e'' e_1} \cap \mathcal{T}_3) \setminus (\mathcal{C}_{e'' e_1} \cap \mathcal{T}_2)$.
As $\mathcal{T}_2 \cup \mathcal{T}_3 \cong B(X_{e''})$ and $B(X_{e''}) \setminus \phi_{X_{e''}}(e')$ is bipartite and connected (by Corollary~\ref{cor:necessary 1/2 components}), there exists a path in $\mathcal{C}_{e'' e_1} \mathcal{C}_{e_2 f_1}$ from $A$ to some vertex $D \in \mathcal{C}_{e'' e_1} \cap \mathcal{T}_2$.
Since $D \in \mathcal{C}_{e'' e_1} \cap \mathcal{T}_2$, its color is determined.
Since the coloring on $\mathcal{C}_{e' f_2} \cap \mathcal{T}_3$ is determined, we conclude that $\chi$ is determined on the whole path from $D$ to $A$. Thus, the color of $A$ is determined.
Now, as $A$ was chosen arbitrarily, it follows that the coloring is determined on $\mathcal{C}_{e'' e_1} \cap \mathcal{T}_3$, from which it follows that $\chi$ is determined on all of $\mathcal{T}_3$.
This proves the claim.
\end{claimproof}
\medskip

\begin{2claim8}
If a 3-coloring $\chi$ of $B(X)$ is non-constant on $\mathcal{C}_{e'' e_1} \cap \mathcal{T}_2$, then the restriction of $\chi$ on $(\mathcal{C}_{e'' e_1} \cap \mathcal{T}_2) \cup (\mathcal{C}_{e'' f_1} \cap \mathcal{T}_4)$ determines $\chi$ on the whole of $\mathcal{T}_4$.
\end{2claim8}
\medskip

\begin{claimproof}
Firstly, let $A \in \mathcal{C}_{e' f_2}$ be a vertex of $B(X)$ in a connected component $K$ of $\mathcal{C}_{e' f_2} \mathcal{C}_{e'' e_1} $.
Since $e_2$ is a half-edge in $X_{e''}$ and $X_{e''}$ is reflexive, $B(X_{e''}) \setminus \phi_{X_{e''}}(e_2)$ is connected (by Corollary~\ref{cor:necessary 1/2 components}).
Thus, there exists a path from $A$ to some vertex $D \in \mathcal{C}_{e'' e_1} \cap \mathcal{T}_2$, which is also in the connected component $K$.
Now, the color class of $D$ is of the form $\phi_{X_{e'}}(x)$, for some $x \in V(X_{e'})$ (because $\mathcal{T}_1 \cup \mathcal{T}_2$ is isomorphic to $X_{e'}$).
The vertex $D$ is also adjacent to some vertex $D' \in \mathcal{C}_{e' f_2}$, which is in turn adjacent (by the aforementioned isomorphism between $\mathcal{T}_3$ and $\mathcal{T}_4$) to a vertex $D'' \in \mathcal{C}_{e'' f_1}$ whose color is determined.

If $D$ and $D''$ are colored with the same color, then they both belong to $\phi_{X_{e'}}(x)$.
Therefore, $x \in D$ and $x \in D''$.
However, by Claim 6, unless $ x \in \{g, e'', e_2 \}$, $ x \in D$ implies that $x \notin D''$.
Meanwhile, if $x \in \{e'', e_2 \}$, then it easily follows that $\chi$ is constant on $\mathcal{C}_{e'' e_1} \cap \mathcal{T}_2$. Finally, $x$ cannot be equal to $g$.
So, in each case, we have a contradiction.
Thus, $D$ and $D''$ must be colored differently.

Since $D$ and $D''$ are colored differently, the color of $D'$ is determined.
Consequently, each connected component $K$ of $\mathcal{C}_{e' f_2} \mathcal{C}_{e'' e_1}$ must contain some vertex $D' \in \mathcal{C}_{e' f_2}$ whose color is determined.

Now, we make an argument similar to that in the proof of Claim 7.
Suppose that $A \in \mathcal{C}_{e' f_2}$.
By using the isomorphism between $\mathcal{T}_3$ and $\mathcal{T}_4$, we see that, in the connected component $K'$ of $\mathcal{C}_{e' f_2} \mathcal{C}_{e_1 e_2}$ containing $A$, there is a path from $A$ to some vertex $D' \in \mathcal{C}_{e' f_2}$ whose color is determined.
Since the color of $D'$ and the colors of vertices in $\mathcal{C}_{e'' f_1}\cap \mathcal{T}_4$ are determined, the neighbor of $D'$ on this path has its color determined.
By iterating this argument, we conclude that all the vertices on this path have their color determined.
Now, as $A$ was chosen arbitrarily, it follows that the coloring is determined on $\mathcal{C}_{e' f_2}$, from which it follows that the coloring on all of $\mathcal{T}_4$ is determined.
This proves the claim.
\end{claimproof}
\medskip

Now, consider an arbitrary 3-coloring $\chi$ of $B(X)$ and its
restriction $\chi'$ on $\mathcal{T}_1 \cup \mathcal{T}_2 \cong B(X_{e'})$.
We consider two cases.

Firstly, if $\chi$ is constant on $\mathcal{C}_{e'' e_1} \cap \mathcal{T}_2$, then by Claim 3, $\chi$ is constant on each cluster of $\mathcal{T}_2$.
Thus, applying Claim 5 and then Claim 3 again, we observe that $\chi$ is constant on all the clusters of $\mathcal{T}_1$, $\mathcal{T}_2$ and $\mathcal{T}_3$.
It then follows from Claim 4 that $\chi$ is constant on every cluster of $B(X)$.
There are three such colorings, compared to two colorings $\chi'$ of $\mathcal{T}_1 \cup \mathcal{T}_2$ which have this form.

Secondly, if $\chi$ is not constant on $\mathcal{C}_{e'' e_1} \cap \mathcal{T}_2$, we need to show that $\chi $ is uniquely determined on all of $B(X)$ by its restriction $\chi'$ to $\mathcal{T}_1 \cup \mathcal{T}_2$.
So, we apply Claim 8.
The 3-coloring $\chi$ of $B(X)$ is determined and non-constant on $\mathcal{C}_{e'' e_1} \cap \mathcal{T}_2$, and determined on $\mathcal{C}_{e'' f_1} $ by the 3-coloring $\chi'$ of $\mathcal{T}_1 \cup \mathcal{T}_2$.
Thus, the coloring is determined on $\mathcal{T}_4$.
Consequently, the 3-coloring is determined and non-constant on $\mathcal{C}_{e'' e_1} \cap \mathcal{T}_2$, and determined on $\mathcal{C}_{e' f_2} $, so by Claim 7, it is determined on $\mathcal{T}_3$.

Hence, all 3-colorings of $\mathcal{T}_1 \cup \mathcal{T}_2 \cong B(Y)$ uniquely extend to $B(X)$ (except in the case when $\mathcal{C}_{e'' f_1}$ and $\mathcal{C}_{e'' e_1}$ are colored identically, in which case the coloring extends in two ways to $B(X)$).  This shows that $B(X)$ has precisely one more 3-coloring than $B(Y)$, as required.
\end{proof}

\begin{proof}[Proof of Lemma \ref{lem:partmain} and of Theorem~\ref{thm: Main}]
By Observation \ref{obs:triangle-free}, $G$ cannot be edge-reflexive if it contains a triangle. So let us assume that $G$ is triangle-free.
By applying Lemma~\ref{lem:op1} and Lemma~\ref{lem: subdiv} repeatedly, we can construct any 2-connected, triangle-free, cubic, outerplanar graph from a cubic 4-cycle (which is edge-reflexive).
This establishes Lemma~\ref{lem:partmain}.
Then, taking this result together with Corollary~\ref{cor:blocks}, Theorem~\ref{thm: Main} follows immediately.
\end{proof}

\section{Subdivisions and reflexive theta graphs}

Outerplanar triangle-free cubic graphs have at least four half-edges. It is therefore a natural question whether there are some cubic graphs with less than four half-edges that are edge-reflexive. If there is just one half-edge, then the graph is not 3-edge-colorable and if there are two half-edges, they receive the same color in every edge-coloring, so such a graph is not edge-colorful. Of course, the most interesting class is when no half-edges are present. Let us observe that $K_{3,3}$ is an edge-reflexive graph (see Figure~\ref{fig: BLK33}) without half-edges.

\begin{figure}[H]
    \begin{minipage}{0.28\textwidth}
        \centering
        \includegraphics[width=0.4\textwidth]{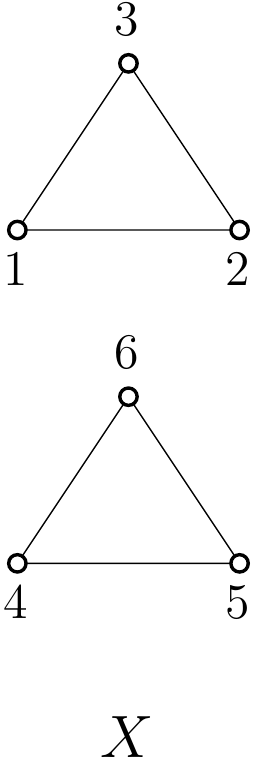}
    \end{minipage}%%%
{\hfill\color{black}\vrule\hfill}%
    \begin{minipage}{0.39\textwidth}
        \centering
        \includegraphics[width=0.95\textwidth]{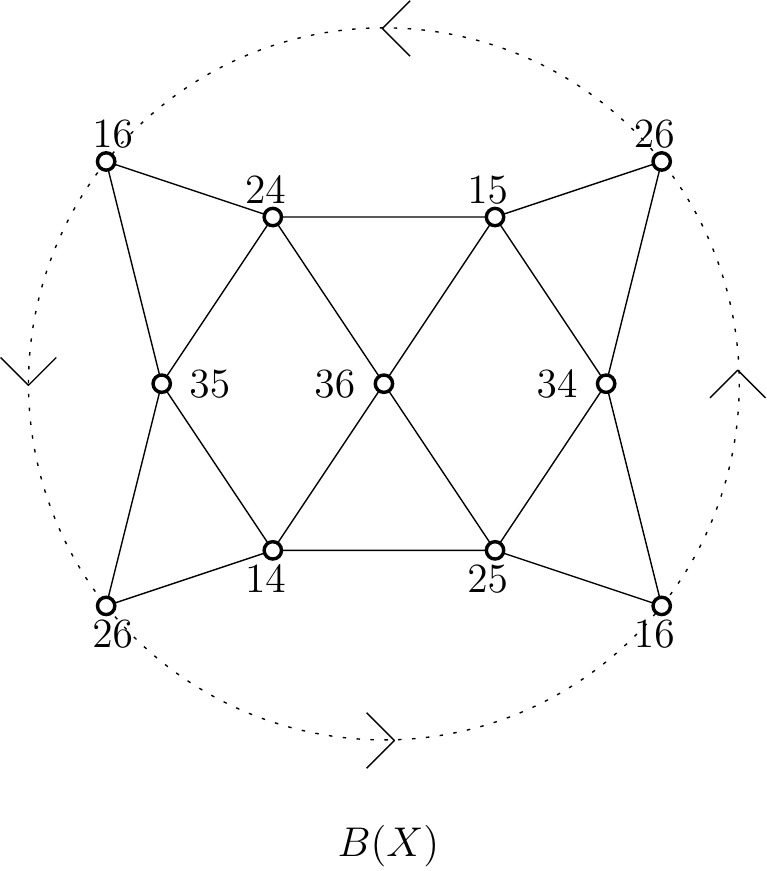}
    \end{minipage}%%
{\hfill\color{black}\vrule\hfill}%
    \begin{minipage}{0.3\textwidth}
        \centering
        \includegraphics[width=0.75\textwidth]{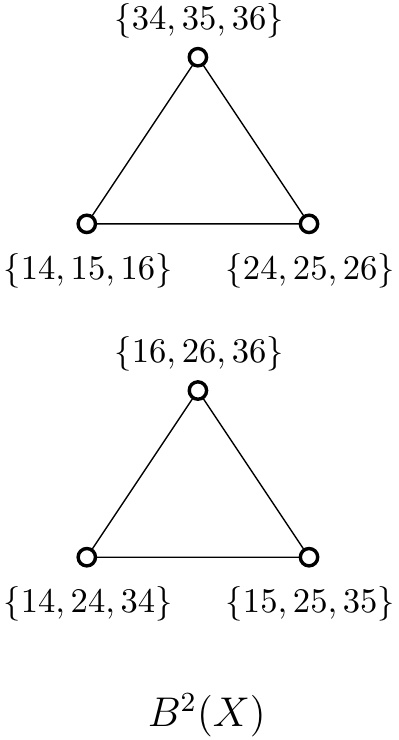}
    \end{minipage}%%

	\caption{The disjoint union of two triangles $X$, its 3-coloring complex $B(X)$ (drawn in the projective plane) which is isomorphic to $L(K_{3,3})$, and the graph $B^{2}(X)$. This shows that $K_{3,3}$ is edge-reflexive.}
	\label{fig: BLK33}
\end{figure}

But there are other classes of reflexive cubic graphs without half-edges. Let us start with the prisms.
The \emph{$n$-prism} $\Pi_n$ is the cubic graph of order $2n$ which is obtained by taking the Cartesian product of an $n$cycle and $K_2$. If $n$ is odd, the $n$-prism is not edge-colorful: in any 3-edge-coloring each pair of the corresponding cycle edges in the Cartesian product is colored the same. This implies that $B^2(L(\Pi_n))$ is isomorphic to $B^2(L(C_n'))$, where $C_n'$ is the cubic $n$-cycle with half-edges. However, prisms of even length are different.

\begin{theorem} \label{thm: evencircladder}
For every even $n\ge4$, the $n$-prism $\Pi_n$ is edge-reflexive.
\end{theorem}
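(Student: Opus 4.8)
The plan is to exploit the structure of $\Pi_n$ as a cubic graph built up from a cubic $4$-cycle by the two operations of Section~3, so that Lemma~\ref{lem:op1} and Lemma~\ref{lem: subdiv} do most of the work; however, neither operation introduces a cycle of even length $n \ge 6$ without also introducing a half-edge, so the prisms themselves are genuinely new and require a separate argument. First I would dispose of the smallest case $n=4$: the $4$-prism $\Pi_4 = Q_3$ (the cube) is handled directly, either by a finite computation of $B(L(Q_3))$ and $B^2(L(Q_3))$, or by observing that $Q_3$ is obtained from the cubic $4$-cycle by one application of each operation (adding a $4$-cycle and then closing it up), and checking separately that the final ``closing up'' step — which deletes the four half-edges and identifies them pairwise into two full edges — preserves edge-reflexivity in this particular instance. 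The bulk of the proof will be an induction on $n$ (over even values), reducing $\Pi_n$ to $\Pi_{n-2}$.

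For the inductive step, I would set up the analogue of the cluster machinery from the proofs of Lemmas~\ref{lem:op1} and~\ref{lem: subdiv}. Fix two ``opposite'' rungs of $\Pi_n$, say the rungs $r$ and $r'$ that are diametrically placed on the prism; deleting the four edges of the two $4$-faces adjacent to these rungs and re-routing gives a copy of $\Pi_{n-2}$ together with a small gadget. More cleanly: one can pass from $\Pi_n$ to $\Pi_{n-2}$ by contracting one of the $4$-faces to a single rung, an operation that is in spirit the reverse of ``adding a $4$-cycle'' except that it acts inside a $2$-connected graph with no half-edges. I would first establish, via the first-Betti-number bookkeeping in Lemma~\ref{lem:edge-reflexive-countC3}, that it suffices to prove $B(L(\Pi_n))$ has exactly $3n - (3(n-2)) = 6$ more $3$-colorings than $B(L(\Pi_{n-2}))$, i.e.\ that each of the four new edges contributes its triangle and nothing parasitic survives. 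Then, writing $X = L(\Pi_n)$ and $Y = L(\Pi_{n-2})$, I would partition $V(B(X))$ into clusters indexed by which of the $O(1)$ ``new'' edges a color class contains, show (using that $\Pi_{n-2}$ is reflexive, hence edge-colorful, and the bipartiteness/connectivity statements of Corollary~\ref{cor:necessary 1/2 components}) that the relevant bipartite graphs between clusters are connected, and invoke Lemma~\ref{lem:Determined} exactly as in the two earlier lemmas to argue that every $3$-coloring of $B(X)$ is determined by its restriction to the sub-complex corresponding to $B(Y)$, up to a bounded number of extension choices whose count gives the required ``$+6$''.

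The main obstacle — and the reason the prisms are not a formal corollary of Lemma~\ref{lem:partmain} — is that $\Pi_n$ is $3$-edge-connected with no half-edges, so I cannot appeal to Corollary~\ref{cor:blocks} and I cannot realize the reduction $\Pi_n \to \Pi_{n-2}$ as one of the two stored operations; I have to re-derive edge-colorfulness of $\Pi_n$ by hand (here the parity of $n$ is essential, since for odd $n$ the prism fails to be edge-colorful, as noted in the paragraph preceding the theorem) and re-prove the cluster isomorphisms from scratch. A secondary subtlety is that when one contracts a $4$-face one must verify that the two ``outer'' edges of that face, which become a single rung of $\Pi_{n-2}$, have a genuinely bipartite-with-two-components $B(Y) - \phi_Y(\cdot)$; this is where even-ness of $n-2$ re-enters, closing the induction. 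I expect the Kempe-change arguments (the analogues of Claims~3, 4, 6 in Lemma~\ref{lem: subdiv}) to be the most delicate bookkeeping, but structurally routine once the cluster partition is chosen correctly.
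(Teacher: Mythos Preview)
Your approach is genuinely different from the paper's, and considerably more laborious. The paper does not induct on $n$ at all; instead it observes a direct structural relationship between $B(L(\Pi_n))$ and $B(L(C_n'))$, where $C_n'$ is the cubic $n$-cycle with half-edges. The key point is that in any $3$-edge-coloring of $\Pi_n$ in which the rung matching $M$ is \emph{not} a single color class, the two $n$-cycles are forced to be colored identically (an easy propagation argument around the prism), so such colorings are in bijection with the $3$-edge-colorings of $C_n'$ not using $M$ as a class. Since $n$ is even there are exactly two colorings of $\Pi_n$ with $M$ as a color class, one of which (the ``unmixed'' one) is already accounted for; the other, the mixed coloring, contributes a single extra triangle to $B(L(\Pi_n))$, attached to the rest only at the vertex $M$. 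Thus $B(L(\Pi_n))$ is literally $B(L(C_n'))$ with one pendant triangle glued on. Since $C_n'$ is edge-reflexive by Theorem~\ref{thm: Main}, $B(L(C_n'))$ has exactly $n$ $3$-colorings, and each extends in exactly two ways over the pendant triangle, giving $2n = |V(\Pi_n)|$ colorings of $B(L(\Pi_n))$; Lemma~\ref{lem:edge-reflexive-countC3} finishes.

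Your inductive reduction $\Pi_n \to \Pi_{n-2}$ is plausible in outline but contains a bookkeeping slip: the number of triangles in $L(\Pi_n)$ is $|V(\Pi_n)| = 2n$ (not $3n$, which is the edge count), so the target surplus in $3$-colorings is $2n - 2(n-2) = 4$, not $6$. More substantively, your route forfeits the main leverage available --- that the prism's coloring complex differs from that of the already-understood cubic cycle by a single triangle --- and replaces a one-paragraph argument with a full cluster analysis in the style of Lemmas~\ref{lem:op1} and~\ref{lem: subdiv}. It might well go through, but you would be reproving from scratch something that follows almost immediately from Theorem~\ref{thm: Main}.
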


\begin{proof}
Let $A_0$ and $B_0$ be the perfect matchings of the first $n$-cycle $S_0$ in $\Pi_n$ and let $A_1,B_1$ be the corresponding perfect matchings in the second $n$-cycle $S_1$. Also, let $M$ be the perfect matching in $\Pi_n$ consisting of all edges joining the two cycles.

Since $n$ is even, there are two 3-edge-colorings of $\Pi_n$ containing $M$ as a color class: $\{A_0\cup A_1,B_0\cup B_1,M\}$ and $\{A_0\cup B_1,B_0\cup A_1,M\}$. The latter one is called the \emph{mixed coloring}.
Further, any other 3-edge-coloring $\{A,B,C\}$ of the cubic $n$-cycle $S_0\cup M$ (where the edges in $M$ are treated as half-edges) that does not contain the whole $M$ as a color class extends uniquely to a 3-edge-colouring of $\Pi_n$ by adding in each color class all edges in $S_1$ that are copies of the edges of $S_0$ in the color class. This shows that $B(L(\Pi_n))$ is isomorphic to $B(L(C_n'))$ with one added triangle corresponding to the mixed coloring. That triangle shares the color class $M$ with the rest of the coloring complex.

It is easy to see that $\Pi_n$ is edge-colorful. Thus it suffices to show, by Lemma \ref{lem:edge-reflexive-countC3}, that $B(L(\Pi_n))$ has precisely $2n$ 3-colorings. Since $C_n'$ is edge-reflexive (by Theorem \ref{thm: Main}), $B(L(C_n'))$ has precisely $n$ 3-colorings. Each such 3-coloring (on the corresponding subgraph of $B(L(\Pi_n))$ extends in two ways to the whole $B(L(\Pi_n))$ since we have two ways to color the vertices of $B(L(\Pi_n))$ corresponding to color classes $A_0\cup B_1$ and $B_0\cup A_1$ of the mixed coloring.
\end{proof}

In Lemma~\ref{lem: subdiv} we showed that, under certain circumstances, the graph $G'$, which we obtain from an edge-reflexive cubic graph $G$ by subdividing an edge $e$ of $G$, is edge-reflexive.
However, there exist edge-reflexive graphs $G$ where, regardless of how many times we subdivide one of its edges, the result will never be edge-reflexive. One such example is $K_{3,3}$.
In fact, there is a more general family.

\begin{prop} \label{prop:K33}
Suppose that $G$ is an edge-reflexive cubic graph without half-edges.  Then no graph $H$ which results from subdividing a single edge of $G$ $k$ times $(k \geq 1)$ is edge-reflexive.
\end{prop}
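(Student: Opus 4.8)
The plan is to show that the graph $H'$ obtained from $G$ by subdividing an edge once already fails to be edge-colorful, and hence, by Observation~\ref{obs:colorful} and the definition of edge-reflexivity, fails to be edge-reflexive; then to argue that this obstruction only persists (indeed gets worse) under further subdivisions. First I would set up notation: let $e = uv$ be the edge of $G$ being subdivided, and write the subdivided path as $u = w_0, w_1, \dots, w_k, w_{k+1} = v$ after $k$ subdivisions, so the new internal vertices $w_1, \dots, w_k$ each receive a half-edge $g_i$. In the line graph $L(H)$ these half-edges, together with the path edges $e_i = w_{i-1}w_i$, form a ``cubic path'' gadget attached to the rest of $L(G)$ at the two ends.

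The key structural observation I would establish is that, because $G$ has \emph{no} half-edges and is cubic, at both vertices $u$ and $v$ the edge being subdivided meets two genuine full edges of $G$; more importantly, since $G$ is edge-colorful and has no half-edges, in every $3$-edge-coloring of $G$ the edge $e$ has a well-defined color and the two other edges at $u$ (resp. $v$) get the remaining two colors. After subdividing, I claim there are two specific edges of $H$ whose colors are forced to agree in \emph{every} $3$-edge-coloring of $H$ — concretely, I would argue that in the path $w_0 w_1 \cdots w_{k+1}$ with half-edges hanging off each internal vertex, the parity of $k$ combined with the rigidity of the two endpoints forces, say, the first path-edge $e_1$ and some other distinguished edge to receive the same color in all colorings. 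The cleanest route: observe that along the subdivided path the coloring is essentially forced once we know the color of $e_1$ (each internal vertex has degree $3$ with one half-edge, so the two path-edges at $w_i$ plus $g_i$ use all three colors, meaning consecutive path-edges must differ and the half-edge color is determined); then reconcile with the constraints coming from the cubic ends of $G$ to locate two edges that are always monochromatic — this is exactly the failure of edge-colorfulness witnessed in Observation~\ref{obs:triangle-free}'s style of argument.

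I would then handle the two subcases by parity of $k$ separately if needed, since which pair of edges is forced equal may alternate; the point is that in \emph{either} parity some forced-equal pair exists, so $H$ is never edge-colorful. Having shown $H$ is not edge-colorful, $\phi_{L(H)}$ is not injective by Observation~\ref{obs:colorful}, so $\phi_{L(H)}\colon L(H) \to B^2(L(H))$ cannot be an isomorphism, and $H$ is not edge-reflexive. For $k \ge 1$ this covers all cases, and since the argument only uses that $G$ is cubic, edge-colorful, and half-edge-free (to pin down the endpoint behavior), it applies uniformly; in particular $K_{3,3}$ is the motivating special case.

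The main obstacle I anticipate is pinning down precisely \emph{which} two edges are forced to share a color and proving it rigorously for all $k$ simultaneously — in particular, making sure the endpoint constraints at $u$ and $v$ genuinely interact with the path parity in the way claimed, rather than the path being colorable ``freely'' from one end. One has to be careful that subdividing does not accidentally create new colorings that break the forced equality; the half-edges $g_i$ give each internal vertex full degree, which is what rigidifies the path, and I expect the honest verification that ``the color of $e_1$ determines the whole path, and the ends then clash'' to be the technical heart of the proof, likely presented via a short induction along the path on the index $i$.
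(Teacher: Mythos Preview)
Your proposed mechanism has a genuine gap: the subdivided path does \emph{not} rigidify the coloring in the way you claim. The assertion that ``the color of $e_1$ determines the whole path'' is false. At each internal vertex $w_i$ the two path-edges $e_i, e_{i+1}$ need only be distinct, so given the color of $e_i$ there are two choices for $e_{i+1}$. In fact, for $k\ge 2$ the path imposes \emph{no} constraint whatsoever on the pair of colors $(c(e_1), c(e_{k+1}))$: any sequence $c_1,\dots,c_{k+1}$ with consecutive entries distinct is realizable, and such a sequence exists for every choice of endpoints when $k\ge 2$. So you will not find a forced-equal pair by analyzing the path locally; whatever constraint exists must come from the rest of $G$.

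The paper's proof supplies exactly that missing global ingredient: the Parity Lemma. Let $H'$ be the cubic graph obtained from $G-e$ by adding half-edges at $u$ and $v$; since $G$ has no half-edges, $|V(G)|$ is even and $H'$ has exactly two half-edges. By the Parity Lemma the numbers $n_1,n_2,n_3$ of half-edges in each color class are congruent mod~$2$, so both half-edges receive the same color in every $3$-edge-coloring of $H'$. Any $3$-edge-coloring of $H$ restricts to one of $H'$ (identifying the half-edge at $u$ with $e_1$ and the one at $v$ with $e_{k+1}$), forcing $c(e_1)=c(e_{k+1})$ in every coloring. For $k=1$ this contradicts $c(e_1)\ne c(e_2)$, so $H$ is not $3$-edge-colorable at all; for $k\ge 2$ it shows $e_1$ and $e_{k+1}$ always share a color, so $H$ is not edge-colorful. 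Note also that the hypothesis ``$G$ edge-reflexive'' is never actually used --- only that $G$ is cubic without half-edges.
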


In the proof we will employ the well-known Parity Lemma.

\begin{lemma} [Parity Lemma] \label{thm: ParityLemma}
Suppose that a cubic graph $G$ is edge-colored.  Let $n_1$, $n_2$ and $n_3$ be the number of half-edges of $G$ in each of the three color classes. Then $n_1$, $n_2$ and $n_3$ are congruent modulo 2.
\end{lemma}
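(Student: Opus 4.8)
The plan is to prove the Parity Lemma by a direct double-counting argument on the incidences between vertices and edges of a fixed color, exploiting the fact that in a proper $3$-edge-coloring of a cubic graph every vertex sees each of the three colors exactly once. The key observation I would set up first is the following bookkeeping: a full edge is incident with two vertices and thus contributes two endpoints, whereas a half-edge is incident with exactly one vertex and contributes a single endpoint. This distinction between full edges and half-edges is the only place where care is needed, and it is precisely what forces the parity statement.

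Fix a color $i \in \{1,2,3\}$, and let $m_i$ denote the number of full edges of color $i$ and $n_i$ the number of half-edges of color $i$. First I would count the set of incidence pairs $(v,e)$, where $v \in V(G)$ and $e$ is an edge of color $i$ incident with $v$. Since the coloring is a proper $3$-edge-coloring of a cubic graph, each vertex has exactly three incident edges (counting half-edges), receiving the three colors one apiece; hence every vertex $v$ lies in exactly one such incidence pair. Counting the pairs by vertices therefore gives a total of $|V(G)|$.

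Counting the same incidence pairs instead by edges of color $i$, each full edge of color $i$ accounts for two pairs (one at each endpoint) and each half-edge of color $i$ accounts for exactly one pair. This yields the identity
$$
    2m_i + n_i = |V(G)|,
$$
valid for each color $i$. Reducing modulo $2$ gives $n_i \equiv |V(G)| \pmod 2$ for every $i \in \{1,2,3\}$, and since the right-hand side does not depend on the color, we conclude $n_1 \equiv n_2 \equiv n_3 \pmod 2$, as claimed.

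I do not expect a genuine obstacle here: the argument is an elementary parity count and requires no machinery beyond the definition of a proper $3$-edge-coloring of a cubic graph with half-edges. The only point that demands attention is the careful separation of the contributions of full edges (two endpoints) from half-edges (one endpoint), since conflating the two would destroy the parity conclusion; once this distinction is respected, the identity $2m_i + n_i = |V(G)|$ is immediate and the lemma follows at once.
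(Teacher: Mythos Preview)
Your proof is correct and is essentially identical to the paper's argument: both derive the identity $n_i = |V(G)| - 2m_i$ (the paper states it in words, you via the double-count $2m_i + n_i = |V(G)|$) and reduce modulo $2$.
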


\begin{proof}
The number of half-edges in a color class is equal to the number $n$ of vertices in $G$, minus twice the number of full edges in the same color class.  Thus, $n \equiv n_1 \equiv  n_2 \equiv n_3 \pmod 2$.
\end{proof}

\begin{proof}[Proof of Proposition~\ref{prop:K33}]
Let the graph $H$ be obtained from $G$ by subdividing $e=uv$ $k\ge1$ times.
The Parity Lemma applied to the graph $H'$ obtained from $H-e$ by adding two half-edges shows that $H$ is not 3-edge-colorable when $k=1$ and that it is not edge-colorful if $k\ge2$ since in every 3-edge-coloring of $H'$, the half-edges are colored the same.
\end{proof}

Proposition~\ref{prop:K33} shows that subdividing a single edge in $K_{3,3}$ yields a graph that is not edge-reflexive.
Of course, this is still not the full story, as some subdivisions of $K_{3,3}$ are edge-reflexive.
For example, if we subdivide each edge of $K_{3,3}$ once, the resulting graph is edge-reflexive.
At this time, we do not fully understand the relation between subdividing edges and edge-reflexivity.
However, we can still use Lemma~\ref{lem: subdiv} to help identify additional infinite families of edge-reflexive graphs.
For example, this lemma is instrumental in proving our next result.

We construct the \emph{cubic theta graph} $T_{k,l,m}$  ($k,l,m \geq 1$) as follows.
Begin with three paths of lengths $k$, $l$ and $m$, respectively.
Label their vertices $u_{0}, u_1, \dots, u_{k}$, $v_{0}, v_1, \dots, v_{l}$ and $w_{0}, w_1, \dots, w_{m}$.
Then identify the vertices $u_{0}$, $v_{0}$ and $w_{0}$, as well as the vertices $u_{k}$, $v_{l}$ and $w_{m}$.
Finally, add half-edges to make the graph cubic.
Observe that $T_{k,l,m} \cong T_{l,k,m} \cong  T_{k,m,l}$, and hence we may assume that $k \leq l \leq m$.

A number of small theta graphs are not edge-reflexive.
In particular, $T_{1,1,m}$ is not edge-reflexive for any $ m \geq 1$ by Proposition~\ref{prop:K33}.
The graph $T_{1,2,m}$ is not edge-reflexive for any $ m \geq 1$, since it contains a triangle.
Additionally, using a computer, we found that $T_{2,2,2}$, $T_{2,2,3}$, $T_{2,2,4}$, $T_{2,3,3}$, $T_{2,3,4}$ and $T_{3,3,3}$ are not edge-reflexive.
However, all other theta graphs are edge-reflexive.

\begin{theorem} \label{thm:outp} The cubic theta graphs $T_{1,1,m}$, $T_{1,2,m}$ $(m \geq 1)$ $T_{2,2,2}$, $T_{2,2,3}$, $T_{2,2,4}$, $T_{2,3,3}$, $T_{2,3,4}$ and $T_{3,3,3}$ are not edge-reflexive.  All other cubic theta graphs are edge-reflexive.
\end{theorem}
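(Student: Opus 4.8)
The plan is to split the statement into its ``not edge-reflexive'' half and its ``edge-reflexive'' half, treating the latter by reducing it, via Lemma~\ref{lem: subdiv}, to a finite list of base cases. For the negative half: the graph $T_{1,1,m}$ contains a double edge (for $m\ge 2$), so it is not edge-colorful, which also follows from Proposition~\ref{prop:K33} applied to the triple edge; the graph $T_{1,2,m}$ contains the triangle formed by the $1$-path and the $2$-path, so it is not edge-colorful by Observation~\ref{obs:triangle-free}; and the six remaining graphs $T_{2,2,2},T_{2,2,3},T_{2,2,4},T_{2,3,3},T_{2,3,4},T_{3,3,3}$ are small enough that one checks directly (by computer) that each is edge-colorful while $B(L(T_{k,l,m}))$ has strictly more than $k+l+m-1$ distinct $3$-colorings, so edge-reflexivity fails by Lemma~\ref{lem:edge-reflexive-countC3}.

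For the positive half, I would first dispose of the theta graphs $T_{1,l,m}$ with $3\le l\le m$. Each such graph is a connected cubic triangle-free \emph{outerplanar} graph: drawing the edge $u_0u_1$ together with the two paths of lengths $l$ and $m$ as nested arcs joining the two hubs places all vertices on the outer face, and adding half-edges does not change this. Hence $T_{1,l,m}$ is edge-reflexive by Theorem~\ref{thm: Main}, and this settles every edge-reflexive theta graph with $\min(k,l,m)=1$.

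The remaining edge-reflexive theta graphs satisfy $2\le k\le l\le m$ and avoid the six exceptions. For these I would use the fact that subdividing an \emph{internal} edge of one of the three paths --- that is, an edge both of whose endpoints are degree-two path vertices (and therefore each carry a half-edge) --- is a legal application of Operation~2, which increases the length of that path by one; such an edge exists exactly when the path has length at least three. A short induction on $(k,l,m)$ then shows that every non-exceptional $T_{k,l,m}$ with $2\le k\le l\le m$ arises by repeatedly performing such subdivisions starting from one of the four graphs
$$T_{2,2,5},\qquad T_{2,3,5},\qquad T_{2,4,4},\qquad T_{3,3,4}:$$
from $T_{2,2,5}$ one reaches every $T_{2,2,m}$ with $m\ge5$; from $T_{2,3,5}$ every $T_{2,3,m}$ with $m\ge5$; from $T_{2,4,4}$ every $T_{2,l,m}$ with $4\le l\le m$; and from $T_{3,3,4}$ every $T_{3,3,m}$ with $m\ge4$, then (subdividing a $3$-path into a $4$-path) every $T_{3,l,m}$ with $4\le l\le m$, and finally (subdividing a further $3$-path) every $T_{k,l,m}$ with $4\le k\le l\le m$. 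The union of these families is exactly the set of non-exceptional triples. Since Operation~2 preserves edge-reflexivity by Lemma~\ref{lem: subdiv}, it then suffices to verify that the four base graphs are edge-reflexive.

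That verification is the crux. The four base graphs are genuinely non-outerplanar --- contracting the three paths of each yields a $K_{2,3}$-minor --- so Theorem~\ref{thm: Main} does not apply; and none of them is obtained from a smaller theta graph by Operation~2, because one can never subdivide an edge incident with a hub (hubs carry no half-edge), and in particular the lone edge of a $1$-path is never subdividable. They must therefore be dealt with directly. The cleanest route is a computer computation of $B^{2}(L(\cdot))$ for each; alternatively, each can be handled by the same cluster-partition-and-count strategy used in the proofs of Lemmas~\ref{lem:op1} and~\ref{lem: subdiv}, now with a somewhat larger number of clusters.
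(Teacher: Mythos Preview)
Your proposal is correct and follows essentially the same strategy as the paper: handle the negative cases by Proposition~\ref{prop:K33}, Observation~\ref{obs:triangle-free}, and direct computer checks for the six sporadic exceptions; handle the positive cases by reducing via Lemma~\ref{lem: subdiv} to a short list of base graphs verified by computer. Your four base cases $T_{2,2,5}$, $T_{2,3,5}$, $T_{2,4,4}$, $T_{3,3,4}$ and the subdivision inductions from them coincide exactly with the paper's.

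The one genuine difference is your treatment of the family $T_{1,l,m}$ with $l,m\ge 3$. You observe that such a graph is a cycle of length $l+m$ with a single chord, hence connected, triangle-free, cubic, and outerplanar, and invoke Theorem~\ref{thm: Main} directly. The paper instead adds $T_{1,3,3}$ as a fifth computer-verified base case and reaches all $T_{1,l,m}$ from it by subdivision. Your route is a bit more economical --- it replaces one machine check by an easy structural remark --- while the paper's is more uniform, treating all five seeds the same way. Either is fine.
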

\begin{proof}

As mentioned above, the graphs listed in the statement of the theorem are not edge-reflexive.
To show that all other cubic theta graphs are edge-reflexive, we have verified by using computer
that $T_{1,3,3}$, $T_{2,2,5}$, $T_{2,3,5}$, $T_{2,4,4}$ and $T_{3,3,4}$ are edge-reflexive.
Since any other cubic theta graph can be obtained from one of these by subdividing edges, Lemma \ref{lem: subdiv} implies that they are all edge-reflexive.
\end{proof}

\section{Problems}

As we saw in the previous section, many non-outerplanar graphs exist which are edge-reflexive.
A theta graph can have a large number of vertices which do not appear on the outer face of any drawing, while $K_{3,3}$ is a non-planar graph.
Nonetheless, in both of these cases (with the exception of a few small theta graphs) we obtain edge-reflexive graphs.

Another class of potentially edge-reflexive cubic graphs we consider particularly interesting are the fusenes (also known as \emph{hexagonal graphs}).
We say that $G$ is a \emph{fusene} if $G$ is a 2-connected plane graph, in which every interior face is a hexagon, all vertices of $G$ have degree three (after adding half-edges) and only vertices on the boundary of the outer face are permitted to be incident with half-edges.

\begin{question}
Do there exist any fusenes that are not edge-reflexive?
\end{question}

We are not aware of any examples, and have confirmed through a lengthy computation that none exist with nine or fewer hexagonal faces. Given any edge-reflexive fusene graph, we obtain an infinite family of edge-reflexive fusenes by using the operation of adding a 4-cycle followed by two subdivisions. However, not all fusenes are obtained this way.

We have also uncovered a generalization of the theta graphs, which may yield another infinite family of edge-reflexive graphs.
We will call a graph \emph{theta ladder} $TL(l,m,n)$ if it is constructed as follows.
Begin with two 6-cycles $a_1a_2a_3a_4a_5a_6$ and $b_1b_2b_3b_4b_5b_6$.
Now, join the edge $a_1a_2$ to the edge $b_1b_2$ with a ladder of length $k$.
Similarly, the edges $a_3a_4$ and $b_3b_4$ are connected by a ladder of length $l$, while $a_5a_6$ and $b_5b_6$ are connected by a ladder of length $m$.
An example given in Figure~\ref{fig: thetaladders} is the theta ladder $TL(3,3,3)$.

\begin{figure}[H]
    \centering
    \includegraphics[width=0.3\textwidth]{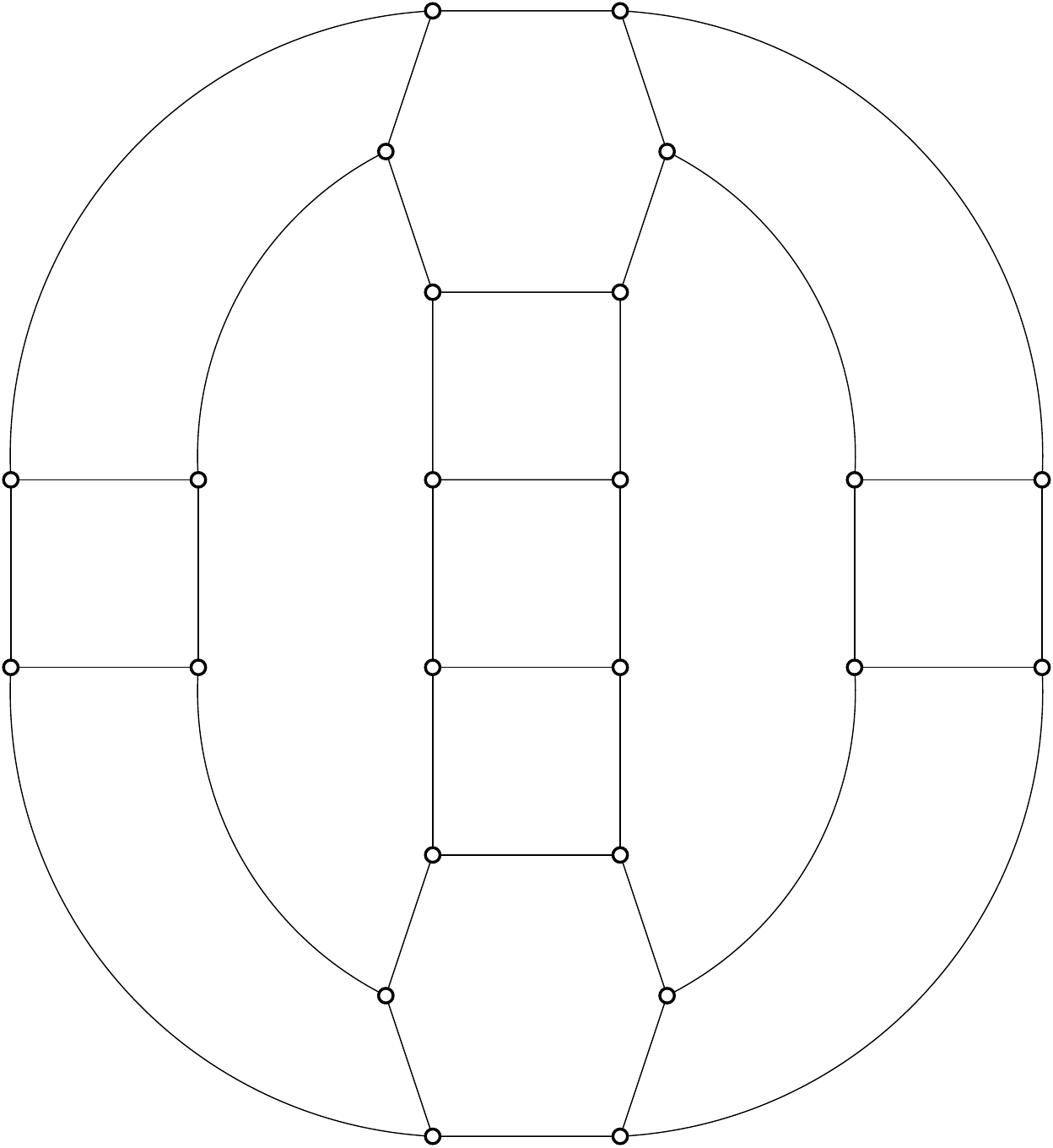}
	\caption{The theta ladder $TL(3,3,3)$ constructed by attaching three ladders to a pair of 6-cycles.  Many graphs of this form are edge-reflexive.}
	\label{fig: thetaladders}
\end{figure}

We have confirmed, using a computer, that $TL(1,1,1)$, $TL(1,3,3)$, $TL(1,3,5)$, $TL(1,3,7)$, $TL(1,3,9)$, $TL(3,3,3)$, $TL(1,5,5)$, $TL(1,5,7)$, $TL(3,3,5)$, $TL(3,5,5)$ and $TL(3,3,7)$ are edge-reflexive. For those $TL(l,m,n)$ where at least one parameter is even and $l+m+n\le 13$ we found out that they are not edge-reflexive, and the same holds for $TL(1,1,3)$ (the only odd-odd-odd exception).
Based on this evidence, we ask the following question.

\begin{question}\label{q:TL}
{\rm (a)} Do there exist any theta ladder graphs $TL(l,m,n)$, where $l,m,n \geq 3$ are all odd, that are not edge-reflexive?

{\rm (b)} Do there exist any theta ladder graphs $TL(l,m,n)$, where $l$ is even, that are edge-reflexive?
\end{question}

The graph $TL(l,m,n)$ is not edge-colorful if one parameter is even and the other two are odd. (We leave the proof of this fact as an exercise.) Thus, in this case $TL(l,m,n)$ is not reflexive. We were not able to establish a similar result for other cases of Question \ref{q:TL}(b).

\bibliographystyle{plain}
\bibliography{references}
\end{document}